\numberwithin{equation}{section} 
\theoremstyle{plain} 
\newtheorem{theorem}{Theorem}
\newtheorem{corollary}{Corollary}
\newtheorem{proposition}{Proposition}
\newtheorem{lemma}{Lemma}
\theoremstyle{definition}
\theoremstyle{remark}
\newtheorem{example}{Example}
\newtheorem{remark}{Remark}
\theoremstyle{remark} 
\newtheorem*{astep}{A-step}
\newtheorem*{pstep}{P-step}
\newtheorem*{cstep}{C-step}
\newcommand{\prob}{\mathsf{P}} 
\newcommand{\E}{\mathsf{E}}
\newcommand{\bel}{\mathsf{bel}}
\newcommand{\pl}{\mathsf{pl}}
\newcommand{\cbel}{\mathsf{cbel}}
\newcommand{\cpl}{\mathsf{cpl}}
\newcommand{\pois}{{\sf Pois}}
\newcommand{\unif}{{\sf Unif}}
\newcommand{\nm}{{\sf N}}
\newcommand{\expo}{{\sf Exp}}
\newcommand{\gam}{{\sf Gam}}
\newcommand{\stt}{{\sf t}}
\newcommand{\ber}{{\sf Ber}}
\newcommand{\chisq}{{\sf ChiSq}}
\newcommand{\RR}{\mathbb{R}}
\newcommand{\XX}{\mathbb{X}}
\newcommand{\UU}{\mathbb{U}}
\newcommand{\VV}{\mathbb{V}}
\newcommand{\G}{\mathscr{G}}
\newcommand{\Gbar}{\overline{\mathscr{G}}}
\newcommand{\gbar}{\overline{g}}
\newcommand{\Xbar}{\overline{X}}
\newcommand{\Ubar}{\overline{U}}
\renewcommand{\S}{\mathcal{S}}
\renewcommand{\SS}{\mathbb{S}}
\newcommand{\ustar}{u^\star}
\newcommand{\vstar}{v^\star}
\newcommand{\del}{\partial}
\DeclareMathOperator{\rank}{rank}
\newcommand{\diag}{\mathrm{diag}}
\renewcommand{\phi}{\varphi} 
\newcommand{\eps}{\varepsilon}
\newcommand{\iid}{\overset{\text{\tiny iid}}{\,\sim\,}}
\title{Conditional inferential models: combining information for prior-free probabilistic inference}
\author{
Ryan Martin \\
Department of Mathematics, Statistics, and Computer Science \\
University of Illinois at Chicago \\
\url{rgmartin@uic.edu} \\
\mbox{} \\
Chuanhai Liu \\
Department of Statistics \\
Purdue University \\
\url{chuanhai@purdue.edu}
}
\date{\today}
\begin{document}

\maketitle 

\begin{abstract}
The inferential model (IM) framework provides valid prior-free probabilistic inference by focusing on predicting unobserved auxiliary variables.  But, efficient IM-based inference can be challenging when the auxiliary variable is of higher dimension than the parameter.  Here we show that features of the auxiliary variable are often fully observed and, in such cases, a simultaneous dimension reduction and information aggregation can be achieved by conditioning.  This proposed conditioning strategy leads to efficient IM inference, and casts new light on Fisher's notions of sufficiency, conditioning, and also Bayesian inference.  A differential equation-driven selection of a conditional association is developed, and validity of the conditional IM is proved under some conditions.  For problems that do not admit a valid conditional IM of the standard form, we propose a more flexible class of conditional IMs based on localization.  Examples of local conditional IMs in a bivariate normal model and a normal variance components model are also given.   

\smallskip

\emph{Keywords and phrases:} Ancillary; auxiliary variable; Bayes; belief function; differential equation; sufficiency; predictive random set; validity. 
\end{abstract}

\section{Introduction}
\label{S:intro}

Fisher's brand of statistical inference \citep{fisher1973} is often viewed as a middle-ground between the Bayesian and frequentist approaches.  Two important examples are his fiducial argument and his ideas on conditional inference.  Perhaps influenced by Fisher's ideas, a current focus in foundational research is on achieving some kind of compromise between the Bayesian and frequentist ideals.  See, for example, recent work on fiducial inference \citep{hannig2009, hannig2012, hannig.lee.2009}, confidence distributions \citep{xie.singh.strawderman.2011, xie.singh.2012}, Dempster--Shafer theory \citep{dempster2008, shafer2011}, and objective Bayes with default, reference, and/or data-dependent priors \citep{berger2006, bergerbernardosun2009, fraser2011, fraser.reid.marras.yi.2010}.  Recently \citet{imbasics} have laid out the details of a promising new \emph{inferential model} (IM) approach.  IMs take the usual input---sampling model and observed data---and produce prior-free, probabilistic measures of certainty about any assertion/hypothesis of interest, with an almost automatic calibration property.  The fundamental idea is that uncertainty about the parameter of interest $\theta$, given observed data $X=x$, is fully characterized by the of an unobservable auxiliary variable $U$.  So, the problem of inference about $\theta$ can be translated into one of predicting this unobserved $U$ with a random set.  In Section~\ref{S:ims} we briefly review the construction and basic properties of IMs. 

The discussion in \citet{imbasics} focuses on the case where $\theta$ and $U$ are of the same dimension.  But there are many problems, e.g., iid data from scalar parameter models, where the dimension of the auxiliary variable is greater than that of the parameter.  In such cases, efficiency can be gained by first reducing the dimension of the auxiliary variable to be predicted, though it is not obvious how this should be done in general.  Here we focus our attention on an auxiliary variable dimension reduction step based on conditioning.  The key observation is that, typically, certain functions of the auxiliary variables are fully observed.  By conditioning on those observed characteristics of the auxiliary variable, we can effectively reduce the dimension of the unobserved characteristics to be predicted.  A motivating example, demonstrating the efficiency gain from dimension reduction, along with the detailed developments are presented in Section~\ref{S:conditional}.  The proposed dimension-reduction approach, based on conditioning, can be viewed as a general tool for combining information about $\theta$ across samples---a counterpart to Bayes' theorem and sufficiency.  With the lower-dimensional auxiliary variable, we proceed to construct what is called a \emph{conditional IM}.  We prove a validity theorem that establishes a desirable calibration property of the conditional IM, and facilitates a common interpretation across users and experiments.  

Finding the dimension-reduced representation is sometimes a familiar task.  For example, when the minimal sufficient statistic has dimension matching that of the parameter, the conditional IM is exactly that obtained by working directly with said statistic.  In other cases, finding the lower-dimensional representation is not so simple.  For this, in Section~\ref{S:finding}, we propose a new differential equation-driven technique for identifying observed characteristics of the auxiliary variable.  Two classical conditional inference problems are worked out in Section~\ref{S:more-examples}, one showing how the proposed differential equation technique leads to an additional dimension reduction beyond what ordinary sufficiency provides.  So, besides the development of conditional IMs, the proposed framework also casts new light on the familiar notion of sufficiency, as well as Fisher's attractive but elusive ideas on ancillary statistics and conditional inference.     

In some cases, however, it may not be possible to produce a valid conditional IM with these somewhat standard techniques.  For this, we propose an extension of the conditional IM framework, in Section~\ref{S:gcim}, which allows the lower-dimensional auxiliary variable representation to depend on $\theta$ in a certain sense.  We refer to these as \emph{local conditional IMs}, and we describe their construction and prove a validity theorem.  An important example of such a problem is the bivariate normal model with known means and variances but unknown correlation.  For this example, we construct a local conditional IM based on a modification of our differential equations technique, and provide the results of a simulation study that shows that our conditional plausibility intervals outperform the classical $r^\star$-driven asymptotically approximate confidence intervals \citep{bn1986, fraser1990} in both small and large samples.  A local conditional IM analysis of the variance-components model, another benchmark problem, is also given.  %Section~\ref{S:discuss} gives some concluding remarks.     

\section{Review of IMs}
\label{S:ims}

\subsection{Notation and construction}
\label{SS:notation}

To fix notation, let $X$ be the observable data, taking values in a space $\XX$, and let $\theta$ be the parameter of interest, taking values in the parameter space $\Theta$.  The starting point of the IM framework is similar to that of fiducial, in the sense that an auxiliary variable, denoted by $U$ and taking values in a space $\UU$ with probability measure $\prob_U$, is associated with $X$ and $\theta$.  It is this association, together with the distribution $U \sim \prob_U$, that characterizes the sampling distribution $X \sim \prob_{X|\theta}$.  In particular, if we write this association as 
\begin{equation}
\label{eq:assoc}
X = a(\theta,U), \quad U \sim \prob_U.
\end{equation}
Throughout, subscripts on $\prob$ indicate which quantity is random.  

Fiducial inference employs the sampling distribution $\prob_U$ \emph{after} $X=x$ is observed.  The IM approach is different in that it treats the unobserved value of $U$, which is tied to the observed data $X=x$ and the \emph{true value} of $\theta$, as the fundamental quantity.  Then the goal is to predict this unobserved value with a random set before conditioning on $X=x$ and inverting \eqref{eq:assoc}.  Let $(\UU,\mathscr{U},\prob_U)$ be a probability space, where $\mathscr{U}$ is rich enough to contain all closed subsets of $\UU$.  Take a collection $\SS$ of closed (hence $\prob_U$-measurable) subsets of $\UU$, assumed to contain $\varnothing$ and $\UU$.  This collection will serve as the support of the predictive random set.  We shall also assume that the collection $\SS$ is nested, i.e., either $S \subseteq S'$ or $S' \subseteq S$ for all $S,S' \in \SS$.  Now define the predictive random set $\S \sim \prob_\S$, supported on $\SS$, with distribution $\prob_\S$ satisfying 
\[ \prob_\S\{\S \subseteq K\} = \sup_{S \in \SS: S \subseteq K} \prob_U(S), \quad K \subseteq \UU. \]
Predictive random sets with these properties are called \emph{admissible}. \citet{imbasics} 
give a sort of complete-class theorem for admissible predictive random sets.  In scalar $\theta$ problems, $\prob_U $ is often $\unif(0,1)$, so an important example of an admissible predictive random set is 
\begin{equation}
\label{eq:default.prs}
\S = \{u: |u-\tfrac12| \leq |U-\tfrac12|\}, \quad U \sim \unif(0,1).
\end{equation}
\citet[][Corollary~1]{imbasics} show that this $\S$---called the ``default'' predictive random set---has a variety of good properties, and these good properties often carry over to the corresponding IM.  It should be mentioned that admissible predictive random sets are not unique.  In this paper we focus primarily on simplicity, but the ideas on optimal predictive random sets in \citet{imbasics} can also be applied here.  

The following three steps---association, predict, and combine---define an IM.  

\begin{astep}
Associate $X$, $\theta$, and $U \sim \prob_U$, consistent with the sampling distribution $X \sim \prob_{X|\theta}$, such that, for all $(x,u)$, there is a unique subset $\Theta_x(u) = \{\theta: x=a(\theta,u)\} \subseteq \Theta$, possibly empty, containing all possible candidate values of $\theta$ given $(x,u)$.  
\end{astep}

\begin{pstep}
Predict the unobserved value $u^\star$ of $U$ associated with the observed data by an admissible predictive random set $\S$.  
\end{pstep}

\begin{cstep}
Combine $\S$ and the association $\Theta_x(u)$ specified in the A-step to obtain 
\begin{equation}
\label{eq:new.focal}
\Theta_x(\S) = \bigcup_{u \in \S} \Theta_x(u).
\end{equation}
Then compute the \emph{belief function}
\begin{equation}
\label{eq:belief}
\bel_x(A; \S) = \prob_\S\{\Theta_x(\S) \subseteq A \mid \Theta_x(\S) \neq \varnothing\}, 
\end{equation}
where $A \subseteq \Theta$ is the assertion/hypothesis about $\theta$ of interest.  \citet{leafliu2012} give an alternative to conditioning on the event ``$\Theta_x(\S) \neq \varnothing$,'' based on stretching, that tends to be more efficient.  
\end{cstep}

The belief function is just one part of the inferential output.  Since the belief function is sub-additive, i.e., $\bel_x(A;\S) + \bel_x(A^c;\S) \leq 1$, one actually needs both $\bel_x(A;\S)$ and $\bel_x(A^c;\S)$ to summarize the information in $x$ concerning the truthfulness of assertion $A$.  In some cases, it is more convenient to report the \emph{plausibility function} 
\begin{equation}
\label{eq:plausibility}
\pl_x(A;\S) = 1-\bel_x(A^c; \S).
\end{equation}
Then the pair $(\bel_x,\pl_x)(A;\S)$ characterize the IM output.  %Note that there are reasons one might consider using a different predictive random set for each of $\bel_x(A;\cdot)$ and $\bel_x(A^c;\cdot)$; see \citet{imbasics} for discussion and examples.    

%Conditioning on the event ``$\Theta_x(\S) \neq \varnothing$'' in \eqref{eq:belief} is analogous to Dempster's rule to avoid conflict cases \citep[e.g.,][]{shafer1976}.  In a certain sense, this conditioning corresponds to a sort of stretching of the predictive random set $\S$.  Some explicit forms of stretching have been considered by \citet{leafliu2012}, and these tend to be more efficient than the conditioning technique; we shall use an explicit stretching in Section~\ref{SS:variance.components} below.   

The IM and fiducial approaches both start with a representation of the sampling model using auxiliary variables, but, beyond that, there are some important differences.  First, by taking the predictive random set $\S = \{U\}$, with $U \sim \prob_U$, a random singleton, the corresponding IM is exactly the fiducial distribution for $\theta$.  However, this singleton random set is not nested, hence, not admissible, so the desirable validity properties in Section~\ref{SS:validity} are not guaranteed.  Second, a subtle point, the interpretation of probability changes as one proceeds along the fiducial argument.  One starts with a non-subjective probability $\prob_U$ for $U$ before $X=x$ is observed.  Then, after $X=x$ is observed, the conditional distribution of $U$, which is concentrated on the set $\{u: x=a(\theta,u)\}$, where $\theta$ is the fixed true parameter value, is replaced by the original $\prob_U$, i.e., one ``continues to regard'' $U$ as a sample from $\prob_U$ after $X=x$ is observed \citep{dempster1963}.  Therefore, despite starting with a non-subjective probability $\prob_U$, the choice to replace the conditional distribution of $U$, given $X=x$, with $\prob_U$ ultimately makes the fiducial probabilities subjective.  This explains why fiducial inference is not valid for some assertions (and finite $n$).  IM probabilities, on the other hand, are based on $\prob_\S$, and the fact that there are no direct links between data and $\S$ means that the $\prob_\S$-probabilities are the same---in terms of both computation and interpretation---before and after $X=x$ is observed.  This explains why IM-based inference is valid whenever the predictive random set is suitably calibrated to $\prob_U$; see Section~\ref{SS:validity} below.

Finally, without practical loss of generality, assume that $\{\prob_{X|\theta}: \theta \in \Theta\}$ has a common dominating measure, say $\mu$.  Then we require that $\bel_x(A;\S)$ be a $\mu$-measurable function in $x$.  This is easy to check in examples, but general sufficient conditions are more elusive.  To keep the presentation simple, we shall mostly ignore these technicalities.

\ifthenelse{1=1}{}{
\subsection{An illustrative example}
\label{SS:gamma.example}

Let $X$ be a single gamma observation with unknown shape $\theta > 0$, i.e., $X \sim \gam(\theta,1)$.  If $F_\theta$ denotes this gamma distribution function, then we can the association as $X = F_\theta^{-1}(U)$, where $U \sim \unif(0,1)$.  The result of the A-step is the simple singleton valued mapping $\Theta_x(u) = \{\theta: F_\theta(x)=u\}$, for given $X=x$.  For the P-step, to keep things simple, we shall use the default predictive random set $\S$ in \eqref{eq:default.prs}, characterized by a draw $U \sim \unif(0,1)$.  Finally, for the C-step, we combine $\Theta_x(\cdot)$ and $\S$ to get the random set 
\[ \Theta_x(\S) = \{\theta: |F_\theta(x)-\tfrac12| \leq |U-\tfrac12|\}, \quad U \sim \unif(0,1). \]
The belief and plausibility functions can be readily calculated based on the distribution of $\Theta_x(\S)$.  For example, for an interval assertion $A_s = \{\theta \leq s\}$, one can get 
\begin{align*}
\bel_x(A_s) & = \prob_\S\{\Theta_x(\S) \subseteq A_s\} = \prob_\S\{\sup \Theta_x(\S) \leq s\} = \max\{1-2F_s(x),0\}, \\
\pl_x(A_s) & = \prob_\S\{\Theta_x(\S) \not\subseteq A_s^c\} = \prob_\S\{\inf \Theta_x(\S) \leq s\} = 1 - \max\{2F_s(x)-1,0\}. 
\end{align*}
Similarly, if $A_s=\{\theta=s\}$ is a singleton, then $\bel_x(A_s) = 0$ for all $s$, and 
\[ \pl_x(A_s) = 1 - |2F_s(x)-1|. \]
For observed $X=5$, graphs of these functions, as $s$ varies, are shown in Figure~\ref{fig:gamma.example}.  %Here we can clearly see the general fact that belief is bounded above by plausibility.  

\begin{figure}
\centering
\fbox{
\subfigure[Assertion $A_s=\{\theta \leq s\}$.]{\scalebox{0.5}{\includegraphics{gamma_int}}}
\subfigure[Assertion $A_s=\{\theta=s\}$.]{\scalebox{0.5}{\includegraphics{gamma_point}}}
}
\caption{\label{fig:gamma.example} Belief and plausibility functions for the gamma shape parameter problem in Section~\ref{SS:gamma.example}, with $X=5$ observed, as a function of the assertion parameter $s$.  Panel~(a): belief function (solid) and plausibility function (dashed).  Panel~(b), plausibility function (solid) and belief function (not shown) is zero.}
\end{figure}

}

\subsection{Validity of IMs}
\label{SS:validity}

%The performance of a predictive random set is measured through the sampling behavior of the corresponding belief function, as a function of $X \sim \prob_{X|\theta}$, for a given assertion $A$.  

Given $\S$, the corresponding IM is \emph{valid for} $A$ if the belief function satisfies
\begin{equation}
\label{eq:bel.valid}
\sup_{\theta \not\in A} \prob_\theta\bigl\{ \bel_X(A;\S) \geq 1-\alpha \bigr\} \leq \alpha, \quad \forall\;\alpha \in (0,1).
\end{equation}
The IM is simply called \emph{valid} if it is valid for all $A$.  In other words, the IM is valid for $A$ if $\bel_X(A;\S)$ is stochastically no larger than $\unif(0,1)$ when $X \sim \prob_{X|\theta}$ with $\theta \not\in A$.  That is, if $A$ is false, then the amount of support in data $X$ for $A$ will be large only for a relatively small proportion of $X$ values.  \citet{imbasics, imbasics.c} show that this validity property holds whenever the predictive random set $\S$ is admissible.  If the IM is valid for all $A$, then \eqref{eq:bel.valid} can be equivalently stated in terms of the plausibility function:
\begin{equation}
\label{eq:pl.valid}
\sup_{\theta \in A} \prob_{X|\theta}\{\pl_X(A; \S) \leq \alpha\} \leq \alpha, \quad \forall \; \alpha \in (0,1).  
\end{equation}
This formulation is occasionally more convenient than \eqref{eq:bel.valid}. 

There are two important consequences of the validity theorem.  First, it helps determine an objective scale on which the belief probabilities can be interpreted.  Unlike valid IMs, the output from default-prior Bayesian, fiducial, and Dempster--Shafer inference does not have a specified scale for interpretation.  For example, is a Bayesian or fiducial posterior probability of 0.9 a large value?  It is common to think on the usual frequency scale, i.e., betting on an event with 0.9 probability wins 90\% of the time, but there is no justification for this without some notion of validity as in \eqref{eq:bel.valid} or \eqref{eq:pl.valid}.  Second, validity justifies the use of IM output to construct frequentist decision procedures with control on error rates.  For example, a $100(1-\alpha)$\% plausibility region for $\theta$ is given by
\begin{equation}
\label{eq:plaus.region}
\{\theta: \pl_x(\theta; \S) > \alpha\}. 
\end{equation}
It follows easily from \eqref{eq:pl.valid} that this plausibility region has nominal $1-\alpha$ coverage probability.  But we should emphasize here that, although plausibility functions can be used to construct frequentist procedures, the interpretation is quite different.  For example,  the plausibility region is understood as the collection of $\theta$'s such that each is \emph{individually} sufficiently plausible, given $X=x$.  Confidence/credible regions, on the other hand, say nothing about the plausibility of any particular $\theta$ they contain.  

%IM validity is a fundamental concern, but efficiency is also an issue.  With this in mind, \citet{imbasics} developed a theory of optimal IMs.  Their optimality theory also addresses the issue of IM uniqueness.  However, unlike validity, which is generally easy to arrange, efficiency is sensitive to dimensionality.  The main focus of this paper is to develop an intermediate conditioning step to effectively reduce the dimension of the auxiliary variable so that high-dimensional predictive random sets are not needed. 

\section{Conditional IMs}
\label{S:conditional}

\subsection{Motivation}
\label{SS:normal.example}

In the case of a scalar auxiliary variable, construction of efficient predictive random sets is relatively easy.  However, rarely does the model directly admit a scalar auxiliary variable representation.  To see this, suppose $X_1,\ldots,X_n$ are independent $\nm(\theta,1)$ with unknown mean $\theta$.  In vector notation, an association is $X = \theta 1_n + U$, where $1_n$ is an $n$-vector of unity, and $U \sim {\sf N}_n(0,I)$.  At first look, it seems that one must predict an $n$-dimensional auxiliary variable $U$.  But efficient prediction of $U$ would be challenging, even for moderate $n$, so reducing its dimension---ideally to one---would be a desirable first step.  After reducing the dimension to one, choosing efficient predictive random sets is as easy as in the scalar auxiliary variable case considered in \citet{imbasics}.  

The basic point is that one pays a price, in terms of efficiency, for predicting higher-dimensional auxiliary variables.  To see this better, we shall take a closer look at the normal mean problem above, with $n=2$.  That is, we have a baseline association 
\[ X_1 = \theta + U_1 \quad \text{and} \quad X_2 = \theta + U_2, \]
where $U_1,U_2$ are independent $\nm(0,1)$.  To make things simple, consider the following change of variables: $Y_1 = X_1+X_2$ and $Y_2 = X_1-X_2$.  In the new variables, we have 
\[ Y_1 = 2\theta + V_1 \quad \text{and} \quad Y_2 = V_2, \]
where $V_1,V_2$ are independent $\nm(0,2)$.  This completes the A-step.  Following the basic procedure described in Section~\ref{S:ims}, for the P-step, we should predict the pair $(V_1,V_2)$ with a predictive random set $\S$.  A simple $L_\infty$ generalization of the default predictive random set \eqref{eq:default.prs} to the case of a two-dimensional auxiliary variable is a random square: 
\[ \S = \{(v_1,v_2): \max(|v_1|,|v_2|) \leq \max(|V_1|,|V_2|)\}, \quad V_1,V_2 \iid \nm(0,2). \]
For a singleton assertion $\{\theta\}$, the C-step gives plausibility function
\[ \pl_y(\theta) = \frac{1 - G( 2^{-1/2}\max\{|y_1-2\theta|,|y_2|\} )^2}{1 - G( 2^{-1/2}|y_2| )^2}, \] 
where $G(z)=1-2(1-\Phi(z))$ is the $|\nm(0,1)|$ distribution function.  The unusual form here is due to the conditioning in \eqref{eq:belief} to remove conflict cases where $\Theta_y(\S) = \varnothing$.  

As an alternative approach, note that the value of $V_2$ is known once $Y_2$ is observed.  So rather than trying to predict this component, as in the approach just described, we might condition on this observed value, to sharpen our uncertainty for predicting $V_1$.  Since $V_1$ and $V_2$ are actually independent, it suffices to work with the marginal distribution, $V_1 \sim \nm(0,2)$.  For the A-step, we get $Y_1 = 2\theta + V_1$ and, for the P-step, we use a default predictive random set $\S=\{v_1: |v_1| \leq V_1\}$, where $V_1 \sim \nm(0,2)$.  For the same singleton assertion, the C-step this time gives plausibility function 
\[ \pl_y(\theta) = 1 - |2\Phi\bigl(2^{-1/2}(y_1-2\theta)\bigr) - 1|. \]

The claim is that inference based on the latter IM formulation is more efficient than that based on the former.  To check this, we consider the sampling distribution of $\pl_Y(0)$ in the case where $Y=(Y_1,Y_2)$ is an independent $\nm(0,2)$ random vector.  Figure~\ref{fig:plaus.qq} shows a quantile plot of the two simulated samples.  By the validity theorem, the plausibilities are both stochastically no smaller than $\unif(0,1)$.  However, we see that plausibilities for the reduced, one-dimensional predictive random set are exactly $\unif(0,1)$ distributed, while those based on the two-dimensional predictive random set tend to be considerably larger.  The larger plausibility means less efficiency, e.g., wider plausibility intervals, so the IM based on the reduced one-dimensional predictive random set is preferred.  This difference in efficiency is explained by the fact that the two-dimensional predictive random set for $(V_1,V_2)$ corresponds to a larger-than-necessary predictive random set for $V_1$; the conflict cases in the two-dimensional case have little to no effect on efficiency.

\begin{figure}
\centering
\fbox{
\scalebox{0.5}{\includegraphics{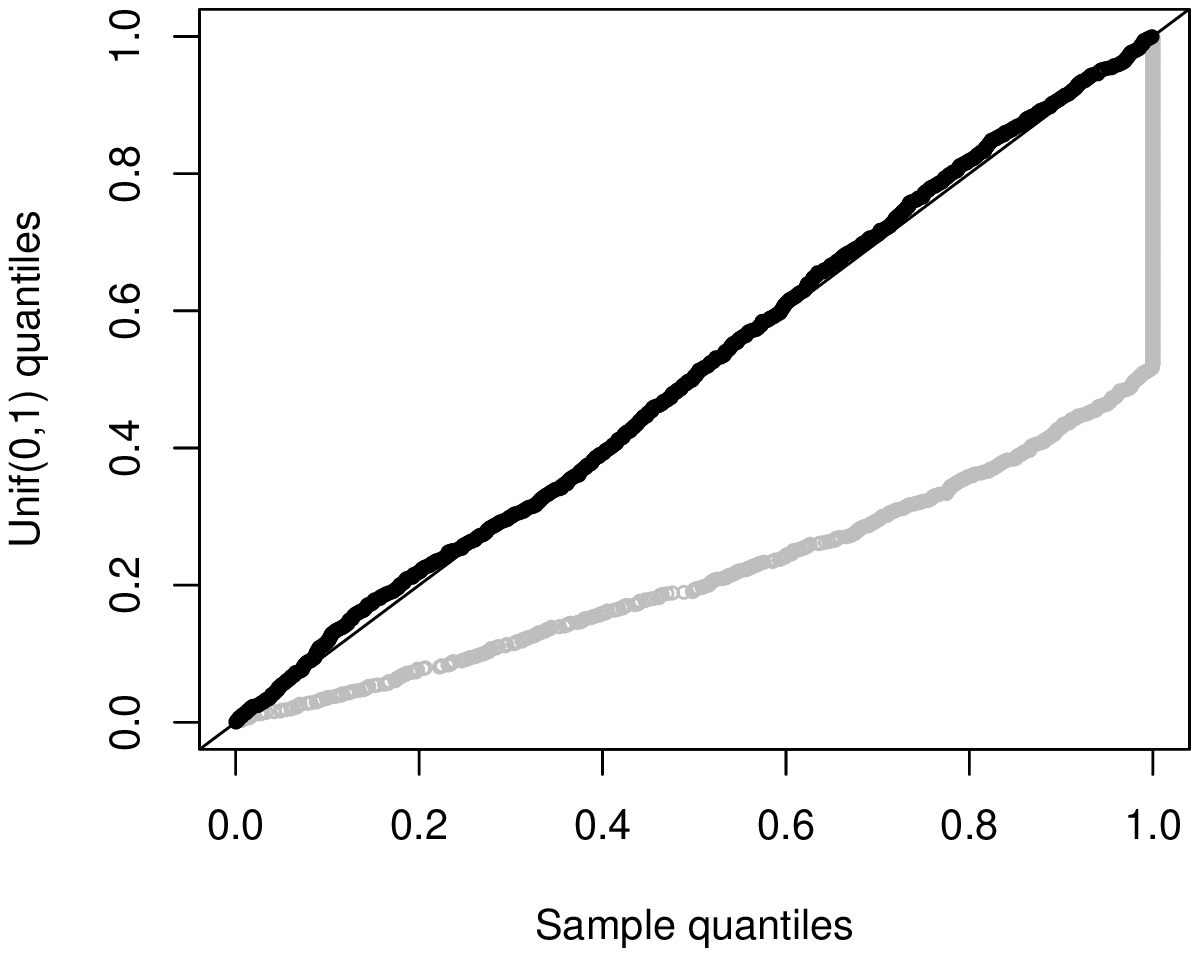}}
}
\caption{\label{fig:plaus.qq} Quantile plot of the two plausibility functions $\pl_Y(0)$ defined in Section~\ref{SS:normal.example}.  Gray points correspond to the two-dimensional predictive random set; black points correspond to the one-dimensional predictive random set.}
\end{figure}

In the remainder of this section, we will give a general prescription for increasing efficiency by reducing the dimension.  The key is that, in general, some functions of the original auxiliary variable are fully observed, like $V_2=U_1-U_2$ in this simple example.  Then the strategy is to condition on what is fully observed to sharpen prediction of what is not observed.  Since this ``conditioning to sharpen inference'' strategy is commonly used in statistics, similar considerations are natural in the IM framework.

\subsection{Dimension reduction via conditioning}
\label{SS:cim}

Here we propose a conditioning strategy, whereby a simultaneous information aggregation and dimension reduction is achieved, that results in an overall gain in efficiency.  The intuition is that some functions of the unobserved $\ustar$ are actually observed, so these characteristics do not need to be predicted.  Focusing only on the unobserved characteristics of $\ustar$ leads directly to a dimension reduction.  However, knowledge about the observed characteristics helps to better predict those unobserved characteristics, so information is accumulated and prediction is sharpened.  The general strategy is as follows:
\begin{itemize}
\item Identify an observed characteristic, $\eta(U)$, of the auxiliary variable $U$ whose distribution is free (or at least mostly free) of $\theta$, and 
\vspace{-2mm}
\item define a conditional association that relates an unobserved characteristic, $\tau(U)$, of the auxiliary variable $U$ to $\theta$ and some function $T(X)$ of the data $X$.  
\end{itemize}
The second step is familiar, as it relates to working with, say, a minimal sufficient statistic.  The first step, however, is less familiar and generally more difficult; see Section~\ref{S:finding}.  

To make this formal, suppose that $x \mapsto (T(x),H(x))$ and $u \mapsto (\tau(u),\eta(u))$ are one-to-one functions.  Suppose that the relationship $x=a(u,\theta)$ in the baseline association \eqref{eq:assoc} can be decomposed as 
\begin{subequations}
\label{eq:decomposition}
\begin{align}
H(x) & = \eta(u), \label{eq:condition} \\
T(x) & = b(\tau(u), \theta). \label{eq:condition0} 
\end{align}
\end{subequations}
This decomposition immediately suggests an alternative association.  Let $(V_T,V_H) \in \VV_T \times \VV_H$ be the image of $U$ under $(\tau,\eta)$, and let $\prob_{V_T|h}$ be the conditional distribution of $V_T$, given $V_H=h$, where $h \in H(\XX)$.  Since $H(x)$ provides no information about $\theta$, we can take a new association 
\begin{equation}
\label{eq:cond-aeqn}
T(X) = b(V_T,\theta), \quad V_T \sim \prob_{V_T | H(x)}. 
\end{equation}
We shall refer to this as a \emph{conditional association}.  This alternative association can be understood via a certain hierarchical representation of the sampling model; see Remark~\ref{re:hierarchy}.  The important point is that $\tau$ can often be chosen so that $V_T$ is of lower dimension than $U$.  In fact, $V_T$ will often have dimension the same as that of $\theta$.  In addition to providing a sort of summary of the data, like in the classical context, this auxiliary variable dimension reduction has a unique advantage in the IM context: efficient predictive random sets for the lower-dimensional $V_T$ are easier to construct.  Furthermore, the conditioning aspect sharpens our predictive ability, improving efficiency even more.  We witnessed, empirically, these gains in efficiency in the simple example in Section~\ref{SS:normal.example}.  Some further remarks on this conditional association, and its connections to Fisher's sufficiency and Bayes theorem, are collected in Section~\ref{SS:remarks}.  

Once a decomposition \eqref{eq:decomposition} is available, construction of the corresponding IM follows exactly as in Section~\ref{S:intro}.  To simplify the presentation later on, here we restate the three-step construction of a \emph{conditional IM}.    

\begin{astep}
Associate $T(x)$ and $\theta$ with the new auxiliary variable $v_T = \tau(u)$ to get the collection of sets $\Theta_{T(x)}(v_T) = \{\theta: T(x) = b(v_T,\theta)\}$, $v_T \in \VV_T$, based on \eqref{eq:cond-aeqn}.  
\end{astep}

\begin{pstep}
Fix $h=H(x)$.  Predict the unobserved value $\vstar_T$ of $V_T$ with a \emph{conditionally admissible} predictive random set $\S \sim \prob_{\S|h}$ (see Section~\ref{SS:condcred}).    
\end{pstep}

\begin{cstep}
Combine results of the A- and P-steps to get 
\begin{equation}
\label{eq:condfocal}
\Theta_{T(x)}(\S) = \bigcup_{v_T \in \S} \Theta_{T(x)}(v_T) \subseteq \Theta. 
\end{equation}
Then the corresponding conditional belief and plausibility functions are given by 
\begin{equation}
\label{eq:condbelief}
\begin{split}
\cbel_{T(x)|h}(A;\S) & = \prob_{\S|h}\{\Theta_{T(x)}(\S) \subseteq A \mid \Theta_{T(x)}(\S) \neq \varnothing\} \\
\cpl_{T(x)|h}(A;\S) & = 1-\cbel_{T(x)|h}(A^c; \S). 
\end{split}
\end{equation}
These functions can be used for inference on $\theta$ just like those in Section~\ref{S:ims}.  The notation $\cbel$ and $\cpl$ is meant to indicate that these are belief and plausibility functions, depending on $(T,H)(x)$, based on predicting the lower-dimensional auxiliary variable $V_T = \tau(U)$ in the conditional association \eqref{eq:cond-aeqn}.   
\end{cstep}

When a decomposition \eqref{eq:decomposition} is available, the conditional association \eqref{eq:cond-aeqn} and the corresponding conditional IM analysis is intuitively reasonable.  One could ask, however, if there is any loss in using IMs built from \eqref{eq:cond-aeqn} instead of \eqref{eq:assoc}.  The following proposition establishes that there is no loss.  

\begin{proposition}
\label{prop:conditioning}
Suppose the baseline association \eqref{eq:assoc} admits a decomposition of the form \eqref{eq:decomposition}.  Let $\S$ be a predictive random set for $U$ in the baseline association with the property that $\prob_\S\{\Theta_x(\S) \neq \varnothing\} > 0$ for all $x$.  Then there exists a predictive random set $\S_{H(x)}$ for $\tau(U)$, depending on $\S$ and $H(x)$, in the conditional association such that $\bel_x(A;\S) = \cbel_{T(x)}(A; \S_{H(x)})$ for all $x$ and all assertions $A \subseteq \Theta$.  
\end{proposition}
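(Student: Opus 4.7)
My plan is to exploit the decomposition \eqref{eq:decomposition} to push the baseline predictive random set $\S$ forward through the coordinate change $u\mapsto(\tau(u),\eta(u))$, and then to show that the baseline and conditional focal sets coincide realization by realization, so that their belief functions must agree.

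Because $x\mapsto(T(x),H(x))$ and $u\mapsto(\tau(u),\eta(u))$ are one-to-one, the equation $x=a(\theta,u)$ from \eqref{eq:assoc} is equivalent to the pair $\eta(u)=H(x)$ and $b(\tau(u),\theta)=T(x)$, so the baseline focal set factors as
\[ \Theta_x(u) = \begin{cases} \Theta_{T(x)}(\tau(u)) & \text{if } \eta(u)=H(x), \\ \varnothing & \text{otherwise.} \end{cases} \]
This naturally suggests defining the candidate predictive random set in the conditional association by
\[ \S_{H(x)} \;=\; \tau\bigl(\S \cap \eta^{-1}(H(x))\bigr), \]
regarded as a random closed subset of $\VV_T$ on the same probability space as $\S$; its distribution is what plays the role of a predictive law given $V_H=H(x)$. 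Taking the union over $u\in\S$ of the case expression above, and discarding the empty contributions from the conflict $u$'s with $\eta(u)\neq H(x)$, yields the realization-wise identity
\[ \Theta_x(\S) \;=\; \bigcup_{v_T\in\S_{H(x)}}\Theta_{T(x)}(v_T) \;=\; \Theta_{T(x)}(\S_{H(x)}), \]
with the convention that an empty union equals $\varnothing$. In particular the events $\{\Theta_x(\S)=\varnothing\}$ and $\{\Theta_{T(x)}(\S_{H(x)})=\varnothing\}$ coincide, and by hypothesis their common complement has positive $\prob_\S$-probability. Substituting these identities into \eqref{eq:belief} and \eqref{eq:condbelief} then gives
\[ \bel_x(A;\S) \;=\; \prob_\S\{\Theta_{T(x)}(\S_{H(x)})\subseteq A \mid \Theta_{T(x)}(\S_{H(x)})\neq\varnothing\} \;=\; \cbel_{T(x)}(A;\S_{H(x)}), \]
for every $A\subseteq\Theta$, which is the claim.

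The main obstacle I anticipate is bookkeeping rather than algebra: the conflict $u$'s sit inside $\S$ yet contribute nothing to $\Theta_x(\S)$, and the conditional framework expects $\S_{H(x)}$ to be a bona fide random closed subset of $\VV_T$. Projection through $\eta^{-1}(H(x))$ removes exactly the conflicting parts by construction, and the one-to-one bi-measurability of $(\tau,\eta)$ supplies the measurability on the $\VV_T$ side. A subtle point worth flagging is that $\S_{H(x)}$ need not inherit nestedness or admissibility from $\S$; the proposition asks only for a predictive random set reproducing the belief function, so this is not a defect of the construction, but it signals that admissibility of conditional predictive random sets is a separate issue that must be re-examined whenever one invokes the validity theorem for the conditional IM.
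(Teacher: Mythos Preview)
Your proof is correct and follows essentially the same route as the paper's: define $\S_{H(x)}=\tau(\S\cap\eta^{-1}(H(x)))$, establish the realization-wise identity $\Theta_x(\S)=\Theta_{T(x)}(\S_{H(x)})$, and conclude that the two belief functions agree because they are the same conditional probability. Your closing remark that $\S_{H(x)}$ need not be admissible matches the paper's own caveat following the proposition.
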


See Appendix~\ref{S:proofs} for the proof.  This says that the conditional association is as good a starting point as the baseline association in the sense that any belief function that obtains from the latter can be matched by one that obtains from the former.  Therefore, the best conditional IM can be no worse than the best baseline IM.  However, as in Section~\ref{SS:normal.example}, by working with predictive random sets for the lower-dimensional auxiliary variable in the conditional association, efficiency can be improved.  The point of this paper, in fact, is that the best conditional IM is more efficient than the best baseline IM.  

A shortcoming of Proposition~\ref{prop:conditioning} is that the predictive random set $\S_{H(x)}$ constructed in the proof may not be valid for prediction of $\tau(U)$, which prevents us from making a direct efficiency comparison of conditional versus baseline IMs.  However, in the special case where $\tau(U)$ and $\eta(U)$ are independent, validity of that predictive random set obtains.  %The following corollary summarizes this.

\begin{corollary}
\label{co:indep.cond}
In the setup of Proposition~\ref{prop:conditioning}, suppose that $\tau(U)$ and $\eta(U)$ are independent.  Then the predictive random set $\S'=\tau(\S)$ for $\tau(U)$ is valid and $\bel_x(A;\S) = \cbel_{T(x)}(A; \S')$ for all $x$ and all assertions $A \subseteq \Theta$.  
\end{corollary}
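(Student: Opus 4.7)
The plan has two parts: use Proposition~\ref{prop:conditioning} to transfer the belief-function identity to the specific predictive random set $\S' = \tau(\S)$, and separately verify that $\S'$ satisfies the calibration/validity condition for $V_T = \tau(U)$.

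For the transfer, I would identify $U$ with its image $(V_T,V_H) := (\tau(U),\eta(U))$ under the one-to-one map $(\tau,\eta)$, so that the joint law decomposes as $\prob_U = \prob_{V_T} \otimes \prob_{V_H}$ under the independence hypothesis; in particular, $\prob_{V_T \mid V_H=h} = \prob_{V_T}$ for every $h \in \VV_H$. Unpacking the construction of the predictive random set $\S_{H(x)}$ used in the proof of Proposition~\ref{prop:conditioning}, its role is to slice $\S$ along the fiber $\{v_H = H(x)\}$ before projecting to $\VV_T$. Because, under independence, the marginal distribution of the projection is unaffected by slicing on any fiber of $V_H$, the random set $\S_{H(x)}$ may be taken equal in distribution to the plain projection $\tau(\S) = \S'$, with no residual dependence on $H(x)$. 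Proposition~\ref{prop:conditioning} then delivers $\bel_x(A;\S) = \cbel_{T(x)}(A;\S')$ for every $x$ and every assertion $A$.

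For validity of $\S'$, I would use the elementary containment $\tau^{-1}(\tau(\S)) \supseteq \S$, which under the product form of $\prob_U$ gives
\[
\prob_{V_T}(\tau(\S)) \;=\; \prob_U\bigl(\tau^{-1}(\tau(\S))\bigr) \;\geq\; \prob_U(\S)
\]
pointwise in the randomness of $\S$, using $\prob_U(\tau^{-1}(A)) = \prob_{V_T}(A) \cdot \prob_{V_H}(\VV_H) = \prob_{V_T}(A)$. Admissibility of $\S$ for $\prob_U$ calibrates the random variable $\prob_U(\S)$ to $\unif(0,1)$ in the sense needed for IM validity, and the inequality above transfers that calibration to $\prob_{V_T}(\tau(\S))$, giving the analogous property for $\S'$ with respect to $\prob_{V_T}$ and hence validity of the resulting conditional IM.

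The main technical obstacle is the first part: one has to look inside the proof of Proposition~\ref{prop:conditioning} and confirm that $\S_{H(x)}$ is genuinely a slice-then-project object so that it collapses to $\tau(\S)$ under independence. A cleaner backup route, if desired, is a direct argument via the decomposition \eqref{eq:decomposition}: write $\Theta_x(\S) = \bigcup_{u \in \S,\, \eta(u) = H(x)} \Theta_{T(x)}(\tau(u))$, compare with $\Theta_{T(x)}(\tau(\S)) = \bigcup_{u \in \S} \Theta_{T(x)}(\tau(u))$, and invoke independence of $\tau(U)$ and $\eta(U)$ to show that, after the conditioning on $\Theta_x(\S) \neq \varnothing$ appearing in \eqref{eq:belief}, the two random sets induce the same belief function. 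Once the first part is settled, the validity inequality in the second part is immediate.
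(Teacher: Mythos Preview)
The paper gives no separate proof of this corollary, so there is nothing explicit to compare against; the surrounding text simply asserts that, under independence, ``validity of that predictive random set obtains.''  Your validity argument (Part~2) is essentially sound: $\tau^{-1}(\tau(\S))\supseteq\S$ gives $Q_{\S'}(\tau(u))\leq Q_\S(u)$ for every $u$, hence $Q_{\S'}(V_T)\leq Q_\S(U)$ in distribution, and admissibility of $\S$ pushes this below $\unif(0,1)$.  Note that independence is not needed for this inequality; independence enters only to identify $\prob_{V_T|h}$ with $\prob_{V_T}$, so that validity with respect to the marginal is the same as conditional validity.

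Your Part~1, however, has a real gap.  You write that ``under independence, the marginal distribution of the projection is unaffected by slicing on any fiber of $V_H$, so $\S_{H(x)}$ may be taken equal in distribution to $\tau(\S)$.''  This conflates two unrelated things.  Independence of $\tau(U)$ and $\eta(U)$ is a statement about the \emph{measure} $\prob_U$: the conditional law of $\tau(U)$ given $\eta(U)=h$ equals its marginal.  The object $\S_{H(x)}=\tau\bigl(\S\cap\eta^{-1}(H(x))\bigr)$, by contrast, is a deterministic set-theoretic operation applied to a realization of $\S$; it has nothing to do with how $\prob_U$ factors.  Slicing a fixed set at $\eta=h$ and then projecting is, generically, a strict subset of the full projection.

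A concrete instance: in the $(v_T,v_H)$ coordinates with $V_T,V_H$ independent, take the admissible predictive random set $\S=\{(v_T,v_H):v_T^2+v_H^2\leq R^2\}$ with $R$ distributed so that $\prob_U(\S)\sim\unif(0,1)$.  Then $\tau(\S)=[-R,R]$, while $\S_{H(x)}=[-\sqrt{R^2-h^2},\sqrt{R^2-h^2}]$ for $h=H(x)$ when $R\geq|h|$ and $\varnothing$ otherwise.  These have different distributions, and for a singleton complement $A=\{\theta_0\}^c$ one computes
\[
\bel_x(A;\S)=\prob\bigl(|T(x)-2\theta_0|>\sqrt{R^2-h^2}\,\big|\,R\geq|h|\bigr),
\qquad
\cbel_{T(x)}(A;\tau(\S))=\prob\bigl(|T(x)-2\theta_0|>R\bigr),
\]
which are unequal for generic $h$ and $T(x)$.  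Your backup route fails for the same reason: the conditioning on $\Theta_x(\S)\neq\varnothing$ restricts to $R\geq|h|$ but does not restore the missing width $\sqrt{R^2-h^2}$ to $R$.

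So the gap is not in your execution but in the identification $\S_{H(x)}\overset{d}{=}\tau(\S)$ itself; as the counterexample shows, the belief-equality part of the corollary does not hold for arbitrary admissible $\S$ under the stated hypothesis alone.  The paper's terse treatment does not resolve this, and the statement as written appears to require either an additional structural assumption on $\S$ (e.g., that its support sets are cylinders $\tau^{-1}(B)$) or a weaker conclusion.
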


Since the predictive random set for $\tau(U)$ is valid, a requirement for IMs, in this case we can conclude that the conditional IM is at least as efficient as the baseline IM.  The independence condition holds in many examples; see Section~\ref{S:more-examples}.  

%The theorem's assumption that $\S$ satisfies $\prob_\S\{\Theta_x(\S) \neq \varnothing\} > 0$, which guarantees that the belief and plausibility functions are well-defined, is relatively mild.  To demonstrate, we can check that the assumption holds for the square predictive random set used in the illustrative example of Section~\ref{SS:normal.example}.  Indeed, in that context, we had 
%\[ \Theta_y(\S) = \{\theta: \max(|y_1-2\theta|,|y_2|) \leq \max(|V_1|,|V_2|)\}, \quad V_1,V_2 \iid \nm(0,2). \]
%One can check that $\Theta_y(\S) \neq \varnothing$ if and only if $\max(|V_1|,|V_2|) \geq |y_2|$.  The latter event has positive $\prob_{(V_1,V_2)}$-probability for all $y_2$, which verifies the theorem's assumption.  

\subsection{Remarks}
\label{SS:remarks}

\begin{remark}
\label{re:separable}
More general decompositions in \eqref{eq:decomposition} are possible.  That is, one may replace ``$H(x) = \eta(u)$'' in \eqref{eq:condition} with ``$c(x,u) = 0$'' for a function $c$.  However, this more general ``non-separable'' case may not fit into the context of the conditional validity theorem; see Theorem~\ref{thm:cond.valid}.  We will have more to say about this in Sections~\ref{SS:condcred} and \ref{S:gcim}.  
\end{remark}

%\begin{remark}
%\label{re:repar}
%It may be convenient to rewrite the conditional association \eqref{eq:cond-aeqn} to absorb the dependence on $h=H(x)$ into the function $b$, so that the auxiliary variable and predictive random set can have a fixed distribution, independent of $x$.  That is, \eqref{eq:cond-aeqn} may be rewritten as 
%\[ T(X) = \tilde b(W, \theta, h), \quad W \sim \prob_W, \]
%where $\prob_W$ is free of both $\theta$ and $h$.  For example, if $V_T$ is one-dimensional, let $F_h$ be its distribution function.  If this distribution is continuous, then $W = F_h(V_T) \sim \unif(0,1)$; in that case, set $\tilde b(w, \theta, h) = b(F_h^{-1}(w), \theta)$.  See Section~\ref{S:more-examples}.  
%\end{remark}

\begin{remark}
\label{re:hierarchy}
The decomposition \eqref{eq:decomposition} boils down to a particular hierarchical representation of the sampling model for $X$.  Indeed, for functions $H$ and $T$ as in \eqref{eq:decomposition}, with $V_H=\eta(U)$, and $V_T = \tau(U)$, data $X \sim \prob_{X|\theta}$ can be simulated as follows. 
\begin{enumerate}
\item[1.] Get $(V_T,V_H)$ by sampling $V_H \sim \prob_{V_H}$ and $V_T \mid V_H \sim \prob_{V_T|V_H}$;
%\vspace{-2mm}
\item[2.] Obtain $X$ by solving the system $H(X)=V_H$ and $T(X)=b(V_T,\theta)$.
\end{enumerate}
This hierarchical model representation also provides the following insight: when $X=x$ is observed, so too is the value of $V_H$, and this knowledge can be used to update the auxiliary variable distribution, analogous to Bayes' theorem.   
\end{remark}

\begin{remark}
\label{re:sufficiency}
There are close connections between the conditional IM and Fisher's notion of sufficiency.  At a high level, both theories provide a sort of dimension reduction.  The key difference between the two is that sufficiency focuses on reducing the dimension of the data, while our approach focuses on reducing the dimension of the auxiliary variable.  Although the conditional IM can, in some cases, correspond to a sufficient statistic-type of reduction, this is not necessary; see the remarks at the end of Section~\ref{SS:student}.  Proper conditioning appears to be more important than the use of sufficient statistics.  In fact, in some cases, it is possible, within the IM framework, to reduce the dimension further than that which is provided by sufficiency; see Section~\ref{S:gcim}.  
\end{remark}

\begin{remark}
\label{re:bayes}
As we mentioned previously, conditional IMs have some connections to Bayes' theorem, in particular, in how information is combined or aggregated across samples.  In fact, it can be shown that, in a certain sense, the Bayes solution is a special case of conditional IMs.  To see this, consider a simple but generic example.  The Bayes model, cast in terms of associations, is of the following form:
\[ \theta = U_0, \quad U_0 \sim \prob_{U_0} \quad \text{and} \quad X=a(U_0,U_1), \quad U_1 \sim \prob_{U_1}, \]
where $\prob_U$ for $U=(U_0,U_1)$ is such that $U_1$ is conditionally independent given $U_0$.  Here $\prob_{U_0}$ is like the prior, and the distribution induced by $u_1 \mapsto a(\theta,u_1)$ given $U_0=\theta$ determines the likelihood.  It is clear that the function $a(U_0,U_1)$ is fully observed, so the conditional IM strategy would employ the conditional distribution of $U_0$ given the observed value $x$ of $a(U_0,U_1)$.  It is not hard to see that the belief function based on the ``naive'' predictive random set $\S=\{U_0\}$ is exactly the Bayesian posterior distribution function.  So in any problem with a known prior distribution, the Bayes solution can be obtained as a special case of the conditional IM.  No non-naive predictive random set is needed here because the naive IM itself is valid; this is consistent with the simple corresponding fact for posterior probabilities under a Bayes model with known prior.  
\end{remark}

\begin{remark}
\label{re:more.bayes}
As a follow-up to Remark~\ref{re:bayes}, since a full prior is not required to construct a conditional IM, it is possible to develop an inferential framework based on conditional IMs and ``partial prior information.''  For example, valid prior information may be available for some but not all components of $\theta$.  Incorporating the prior information where it is available while remaining prior-free where it is not can be obtained by slight extension of the argument in the previous remark.  This important application of conditional IMs deserves further investigation.  See, also, \cite{xie.singh.2012}.  
\end{remark}

\subsection{Validity of conditional IMs}
\label{SS:condcred}

Here we extend the validity results in \citet{imbasics} to the conditional IM context.  The main obstacle is that the distribution function $\prob_\S$, determined by the conditional distribution $\prob_{V_T|H(x)}$ in \eqref{eq:decomposition}, depends on data through the value $H(x)$.  This is handled in Theorem~\ref{thm:cond.valid} below by conditioning on the observed value of $H(X)$.     

Fix $h \in H(\XX)$, and let $\SS_h$ be a collection of closed $\prob_{V_T|h}$-measurable subsets of $\VV_T$ that contains both $\varnothing$ and $\VV_T$.  Like before, we also assume that $\SS_h$ is nested in the sense that either $S \subseteq S'$ or $S' \subseteq S$ for all $S,S' \in \SS_h$.  Then $\S$ is a \emph{conditionally admissible} predictive random set, given $h$, if its distribution $\prob_{\S|h}$ satisfies 
\begin{equation}
\label{eq:cond.natural.measure}
\prob_{\S|h}\{\S \subseteq K\} = \sup_{S \in \SS_h: S \subseteq K} \prob_{V_T|h}\{S\}, \quad K \subseteq \VV_T. 
\end{equation}
In this case, the distribution of $\S$ depends on the particular $h$.  We now have the following extension of the validity theorem to the case of conditional IMs.  

\begin{theorem}
\label{thm:cond.valid}
For any $h$, suppose that $\S$ is conditionally admissible, given $h$, with distribution $\prob_{\S|h}$ as in \eqref{eq:cond.natural.measure}.  If $\Theta_{T(x)}(\S) \neq \varnothing$ with $\prob_{\S|h}$-probability~1 for all $x$ such that $H(x)=h$, then the conditional IM is conditionally valid, i.e., for any $A \subseteq \Theta$, 
\begin{equation}
\label{eq:cond.valid.bel}
\sup_{\theta \not\in A} \prob_{X|\theta}\{\cbel_{T(X)|h}(A;\S) \geq 1-\alpha \mid H(X) = h\} \leq \alpha, \quad \forall \; \alpha \in (0,1). 
\end{equation}
\end{theorem}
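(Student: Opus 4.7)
The plan is to mimic the baseline validity argument of \citet{imbasics}, applied inside the conditional probability space defined by fixing $H(X)=h$. Fix $h \in H(\XX)$, an assertion $A \subseteq \Theta$, and a true parameter $\theta \notin A$; let $x$ be any point with $H(x)=h$, and let $\vstar \in \VV_T$ denote the value satisfying $T(x) = b(\vstar, \theta)$, so that $\theta \in \Theta_{T(x)}(\vstar)$. The non-emptiness hypothesis collapses the conditioning in \eqref{eq:condbelief}, giving $\cbel_{T(x)|h}(A;\S) = \prob_{\S|h}\{\Theta_{T(x)}(\S) \subseteq A\}$. Whenever $\vstar \in \S$, we have $\theta \in \Theta_{T(x)}(\S)$, which combined with $\theta \notin A$ forces $\Theta_{T(x)}(\S) \not\subseteq A$. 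Consequently $\{\Theta_{T(x)}(\S) \subseteq A\} \subseteq \{\S \not\ni \vstar\}$, yielding the deterministic bound
\[
\cbel_{T(x)|h}(A;\S) \leq 1 - \prob_{\S|h}\{\S \ni \vstar\} =: Q_h(\vstar).
\]

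Next, because $\SS_h$ is nested and $\prob_{\S|h}$ is the natural measure in \eqref{eq:cond.natural.measure}, I would show that under $V_T \sim \prob_{V_T|h}$ the random variable $Q_h(V_T)$ is stochastically no larger than $\unif(0,1)$. This is the conditional analogue of the admissibility-calibration lemma underlying the baseline validity theorem in \citet{imbasics}, and the argument, which relies only on the nested structure and the natural-measure specification, transfers verbatim to the probability space $(\VV_T, \prob_{V_T|h})$. In particular, $\prob_{V_T|h}\{Q_h(V_T) \geq 1-\alpha\} \leq \alpha$ for every $\alpha \in (0,1)$.

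To close the argument, note that under $\prob_{X|\theta}$ conditioned on $H(X)=h$, the hierarchical representation in Remark~\ref{re:hierarchy} gives $T(X) = b(V_T, \theta)$ with $V_T$ drawn from $\prob_{V_T|h}$, a law that does not depend on $\theta$. Applying the bound of the first paragraph at $\vstar = V_T$ and invoking the calibration bound just stated,
\begin{align*}
\prob_{X|\theta}\{\cbel_{T(X)|h}(A;\S) \geq 1-\alpha \mid H(X) = h\}
&\leq \prob_{X|\theta}\{Q_h(V_T) \geq 1-\alpha \mid H(X) = h\} \\
&= \prob_{V_T|h}\{Q_h(V_T) \geq 1-\alpha\} \;\leq\; \alpha.
\end{align*}
Taking the supremum over $\theta \notin A$ then yields \eqref{eq:cond.valid.bel}.

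The one genuine subtlety is ensuring that the conditional law of $V_T$ given $H(X)=h$ under the sampling model $\prob_{X|\theta}$ coincides with the $\theta$-free measure $\prob_{V_T|h}$ used to construct $\prob_{\S|h}$. This is precisely where the separable decomposition \eqref{eq:decomposition}, with $H$ free of $\theta$, is crucial, and is the reason Remark~\ref{re:separable} singles out the non-separable case as requiring separate treatment. Existence of regular conditional distributions and measurability of the maps $(T,H)$ and $(\tau,\eta)$ are standard regularity issues in this framework and would be invoked without proof.
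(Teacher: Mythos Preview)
Your proposal is correct and follows essentially the same route as the paper: bound $\cbel_{T(x)|h}(A;\S)$ by $Q_h(\vstar)=\prob_{\S|h}\{\S\not\ni \vstar\}$ (the paper phrases this as monotonicity $\cbel(A)\leq\cbel(\{\theta\}^c)$, which is your event-inclusion argument), invoke the admissibility calibration lemma that $Q_h(V_T)$ is stochastically sub-uniform under $\prob_{V_T|h}$, and use the separable decomposition to identify the conditional law of $T(X)$ given $H(X)=h$ with $V_T\sim\prob_{V_T|h}$ before taking the supremum over $\theta\notin A$. Your explicit remark about why separability is needed is a nice addition, and your use of the inclusion $\{\Theta_{T(x)}(\S)\subseteq A\}\subseteq\{\S\not\ni \vstar\}$ is in fact slightly more robust than the paper's equality $\prob_{\S|h}\{\Theta_{T(X)}(\S)\not\ni\theta\}=Q_{\S|h}(V_T)$, which tacitly assumes uniqueness of the solution $v$ to $T(x)=b(v,\theta)$.
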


Now is a good time to recall Remark~\ref{re:separable}.  More general decompositions of the baseline association are allowed in the discussion in Section~\ref{SS:cim}, but only for the ``separable'' version \eqref{eq:condition} is it possible to prove a conditional validity theorem.  The point is that a condition like $c(X,U)=0$ does not identify a fixed subset of the sample space on which probability calculations can be restricted---the subspace would depend on $U$.  

Since the calibration property in Theorem~\ref{thm:cond.valid} holds for all assertions $A$, we may translate \eqref{eq:cond.valid.bel} to a statement in terms of the corresponding plausibility function:
\begin{equation}
\label{eq:cond.valid.pl}
\sup_{\theta \in A} \prob_{X|\theta}\{\cpl_{T(X)|h}(A;\S) \leq \alpha \mid H(X)=h\} \leq \alpha, \quad \forall\;\alpha \in (0,1). 
\end{equation}
So, in addition to providing an objective scale for interpreting the conditional belief and plausibility function values, \eqref{eq:cond.valid.pl} provides desirable properties of conditional IM-based frequentist procedures.  For example, if $h=H(x)$ is observed, the conditional $100(1-\alpha)\%$ plausibility region for $\theta$ is $\{\theta: \cpl_{T(x)|h}(\theta; \S) > \alpha\}$.  Then, by \eqref{eq:cond.valid.pl}, the conditional coverage probability is $\prob_{X|\theta}\{\cpl_{T(X)|h}(\theta;\S) > \alpha \mid H(X)=h\} \geq 1-\alpha$.  In Fisher's mind, this is a more meaningful coverage probability since it is conditioned on a particular aspect of the observed data, namely, $H(x)=h$.  In other words, the probability calculation focuses on a relevant subset $\{x: H(x)=h\}$ of the sample space.  In some cases, though, conditional validity is the same as ordinary validity.  

\begin{corollary}
\label{co:cond.valid}
Suppose that the predictive random set $\S$ does not depend on the observed $H(x)=h$, so that $\prob_{\S|h} \equiv \prob_\S$ and $\cbel_{T(x)|h} \equiv \cbel_{T(x)}$.  Then under the conditions of Theorem~\ref{thm:cond.valid}, the conditional IM is unconditionally valid, i.e., for any $A \subseteq \Theta$, 
\[ \sup_{\theta \not\in A} \prob_{X|\theta}\{\cbel_{T(X)}(A;\S) \geq 1-\alpha\} \leq \alpha, \quad \forall \; \alpha \in (0,1).  \]
\end{corollary}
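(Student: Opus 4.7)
The plan is to deduce the unconditional bound by integrating the conditional bound from Theorem~\ref{thm:cond.valid} over the marginal distribution of $H(X)$. The key simplification provided by the hypothesis is that $\cbel_{T(X)|h}(A;\S)$, viewed as a function of $X$, collapses to $\cbel_{T(X)}(A;\S)$, which no longer depends on the realized value of $H(X)$. This means that the random variable whose tail probability we want to control is the same object whether we work conditionally on $H(X)=h$ or unconditionally.

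First I would fix an arbitrary $\theta\not\in A$ and $\alpha\in(0,1)$. Applying Theorem~\ref{thm:cond.valid} to this $\theta$ and using the identification $\cbel_{T(x)|h}\equiv \cbel_{T(x)}$ from the corollary's hypothesis, I obtain
\[
\prob_{X|\theta}\bigl\{\cbel_{T(X)}(A;\S)\geq 1-\alpha \,\big|\, H(X)=h\bigr\}\leq \alpha
\]
for every admissible value $h$, and the bound is uniform in $h$.

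Next I would apply the law of total probability (tower property of conditional expectation) with respect to the marginal law of $H(X)$ under $\prob_{X|\theta}$:
\[
\prob_{X|\theta}\bigl\{\cbel_{T(X)}(A;\S)\geq 1-\alpha\bigr\}
=\int \prob_{X|\theta}\bigl\{\cbel_{T(X)}(A;\S)\geq 1-\alpha \,\big|\, H(X)=h\bigr\}\,d\prob_{H(X)|\theta}(h).
\]
Since the integrand is bounded above by $\alpha$ for every $h$, the integral is also bounded above by $\alpha$. Taking the supremum over $\theta\not\in A$ preserves the bound, since the conditional bound from Theorem~\ref{thm:cond.valid} already holds uniformly over such $\theta$.

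There is no real obstacle here; the argument is essentially a routine Fubini/tower-property averaging. The only delicate point worth checking is measurability of $h\mapsto \prob_{X|\theta}\{\cbel_{T(X)}(A;\S)\geq 1-\alpha\mid H(X)=h\}$, which is needed for the total-probability formula to be meaningful, but this is covered implicitly by the standing measurability assumption on belief functions mentioned at the end of Section~\ref{SS:notation} together with the regularity of the conditional law $\prob_{V_T|h}$ that underlies Theorem~\ref{thm:cond.valid}.
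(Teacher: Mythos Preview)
Your argument is correct and is essentially identical to the paper's: both fix $\theta\not\in A$, use the hypothesis to drop the $h$-dependence in the belief function, and then average the conditional bound from Theorem~\ref{thm:cond.valid} over the marginal law of $H(X)$ before taking the supremum over $\theta$. The paper states this in a single sentence (``take expectation over $h$ to remove the conditioning''), while you have spelled out the tower-property step explicitly.
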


Two possible ways the condition of Corollary~\ref{co:cond.valid} may hold are as follows.  First, in the P-step, the user may specify $\S$ directly without dependence on the observed $H(x)=h$; see Section~\ref{SS:student}.  Second, it could happen that $V_T$ and $V_H$ are statistically independent, in which case the distribution $\prob_\S$ for $\S$ is determined by the marginal distribution of $V_T$, which does not depend on $h$.

\section{Finding conditional associations}
\label{S:finding}

\subsection{Familiar things: likelihood and symmetry}

In many problems, finding a decomposition \eqref{eq:decomposition} and the corresponding conditional association is easy to do.  In general, the definition of sufficiency implies that we can define a conditional association via, say, the marginal distribution of the minimal sufficient statistic; see Section~\ref{SS:gamma}.  In standard problems, such as full-rank exponential families, minimal sufficient statistics are easily obtained, so this is probably the simplest approach.  This, of course, includes both discrete and continuous problems.  Similarly, if the problem has a group structure, invariance considerations can be used to find a decomposition; see Section~\ref{SS:student}.  But one can consider other conditional associations if desirable.  For example, when the minimal sufficient statistic has dimension larger than that of the parameter, like in curved exponential families, then some special conditioning can potentially further reduce the dimension; see Section~\ref{SS:nile}.  

%{\color{blue} More details...?}

\subsection{A new differential equations-based technique}
\label{SS:diffeq}

Here we describe a novel technique for finding conditional associations, based on differential equations.  The method can be used for going directly from the baseline association to something lower-dimensional.  In fact, in those nice problems mentioned above, it is easy to check that this differential equation-based technique reproduces the solutions based on minimal sufficiency, group invariance, etc.  However, in our experience, this new approach is especially powerful in cases where the familiar things fail to give a fully satisfactory reduction.  In such cases, the differential equation-based technique can provide a further dimension reduction, beyond what sufficiency alone can give.  

For concreteness, suppose $\Theta \subseteq \RR$; the multi-parameter case can be handled similarly, as in Section~\ref{SS:variance.components}.  The intuition is that $\tau$ should map $\UU \subseteq \RR^n$ to $\Theta$, so that $V_T=\tau(U)$ is one-dimensional, like $\theta$.  Moreover, $\eta$ should map $\UU$ into a $(n-1)$-dimensional manifold in $\RR^n$, and be insensitive to changes in $\theta$ in the following sense.  For baseline association $x=a(\theta,u)$, suppose that $u_{x,\theta}$ is the unique solution for $u$.  Then for fixed $x$, we require that $\eta(u_{x,\theta})$ be constant in $\theta$.  In other words, we require that $\del u_{x,\theta}/\del\theta$ exists and 
\begin{equation}
\label{eq:diffeq}
\underset{n \times 1}{0} = \frac{\del \eta(u_{x,\theta})}{\del \theta} = \underset{\text{$n \times n$, rank $n-1$}}{\frac{\del \eta(u)}{\del u} \Bigr|_{u=u_{x,\theta}}} \cdot \underset{n \times 1}{\frac{\del u_{x,\theta}}{\del \theta}}.  
\end{equation}
It is clear from the construction that, if a solution $\eta$ of this partial differential equation exists, then the value of $\eta(U)$ is fully observed, i.e., there is a corresponding function $H$, not depending on $\theta$, such that $H(X)=\eta(U)$.  So, with appropriate choice of $\tau$, the solution $\eta$ of \eqref{eq:diffeq} determines the decomposition \eqref{eq:decomposition}.  A different but related use of $\theta$-derivatives of the association is presented in \citet{fraser.fraser.staicu.2010}.  

Formal theory on existence of solutions and on solving the differential equation system \eqref{eq:diffeq} is available.  For example, the method of characteristics described in \citet{polyanin2002} is powerful tool for solving such systems.  However, such formalities here will take us too far off track.  Examples of this method in action are given in Section~\ref{SS:nile}, \ref{SS:bvn2}, and \ref{SS:variance.components}.  In all three cases, this differential equations method is applied after an initial step based on sufficiency provides an unsatisfactory dimension reduction.

\section{Three detailed examples}
\label{S:more-examples}

\subsection{A Student-t location problem}
\label{SS:student}

Suppose $X_1,\ldots,X_n$ is an independent sample from a Student-t distribution $\stt_\nu(\theta)$, where the degrees of freedom $\nu$ is known but the location $\theta$ is unknown.  This is a somewhat peculiar problem because there is no satisfactory reduction via sufficiency.  For the IM approach, start with a baseline association $X = \theta 1_n + U$, with $U=(U_1,\ldots,U_n)^\top$ and $U_i \sim \stt_\nu$, independent, for $i=1,\ldots,n$.  For this location parameter problem, invariance considerations suggest the following decomposition:
\[ X-T(X)1_n = U-T(U)1_n \quad \text{and} \quad T(X) = \theta + T(U), \]
where $T(\cdot)$ is the maximum likelihood estimator.  Let $V_T=T(U)$ and $V_H = H(U)=U-T(U)1_n$.  If $h$ is the observed $H(X)$, then it follows from the result of \citet{bn1983} that the conditional distribution of $V_T$, given $V_H=h$, has a density
\[ f_{\nu,h}(v_T) = c(\nu,h) \prod_{i=1}^n \bigl\{ \nu + (v_T+h_i)^2 \bigr\}^{-(\nu+1)/2}, \]
where $c(\nu,h)$ is a normalizing constant that depends only on $\nu$ and $h$.  If we write $F_{\nu,h}$ for the distribution function corresponding to the density $f_{\nu,h}$ above, then a conditional IM for $\theta$ can be built based on the following association: 
\[ T(X) = \theta + F_{\nu,h}^{-1}(W), \quad W \sim \unif(0,1). \]
With this conditional association, we are ready for the P- and C-steps.  For simplicity, in the P-step we elect to take the predictive random set $\S$ as in \eqref{eq:default.prs}; this also has some theoretical justification since $f_{\nu,h}$ should be approximately symmetric about $v_T=0$ \citep[][Sec.~4.3.2]{imbasics}.  For the C-step, the random set $\Theta_{T(x)}(\S)$ is 
\[ \bigl[ T(x) - F_{\nu,h}^{-1}\bigl(\tfrac12+|W-\tfrac12|\bigr), \, T(x) - F_{\nu,h}^{-1}\bigl(\tfrac12-|W-\tfrac12|\bigr) \bigl], \quad W \sim \unif(0,1). \]
From this point, numerical methods can be used to compute the conditional belief and plausibility functions.  For example, if $A=\{\theta\}$ is a singleton assertion, then 
\[ \cpl_{T(x)|h}(\theta; \S) = 1-\bigl|1-2F_{\nu,h}(\theta-T(x)) \bigr|, \]
and the corresponding $100(1-\alpha)$\% plausibility interval for $\theta$ is 
\[ \{\theta: \cpl_{T(x)|h}(\theta; \S) > \alpha\} = \bigl( T(x) + F_{\nu,h}^{-1}(\alpha/2), \, T(x) + F_{\nu,h}^{-1}(1-\alpha/2) \bigr). \]

For illustration, we present the results of a simple simulation study.  In particular, for several pairs $(n,\nu)$, 5000 Monte Carlo samples of size $n$ are obtained from a Student-t distribution with $\nu$ degrees of freedom and center $\theta = 0$.  For each sample, the 95\% plausibility interval for $\theta$ based on the conditional IM above is obtained.  For comparison, we also compute the 95\% confidence interval based on the asymptotic normality of the maximum likelihood estimate, and a 95\% flat-prior Bayesian credible interval.  The results of this simulation are summarized in Table~\ref{table:t}.  We find that the results here are almost indistinguishable, so favor must go to the plausibility intervals, since these have guaranteed coverage for all $n$, while the other two are only asymptotically correct.

\begin{table}
\caption{\label{table:t} Coverage probabilities and expected lengths of the 95\% intervals for $\theta$ in the Student-t example based on, respectively, the conditional IM (CIM), asymptotic normality of the maximum likelihood estimate (MLE), and flat-prior Bayes.}
\centering
\fbox{
\begin{tabular}{cccccccccccc}
%\hline
& & & \multicolumn{4}{c}{Coverage probability} & & \multicolumn{4}{c}{Expected length} \\
& & & \multicolumn{4}{c}{$\nu$} & & \multicolumn{4}{c}{$\nu$} \\
\cline{4-7} \cline{9-12} 
Method & $n$ & & 3 & 5 & 10 & 25 & & 3 & 5 & 10 & 25 \\
\hline
CIM & 5 & & 0.944 & 0.949 & 0.951 & 0.949 & & 2.28 & 2.08 & 1.93 & 1.83 \\
& 10 & & 0.949 & 0.951 & 0.952 & 0.953 & & 1.56 & 1.45 & 1.35 & 1.29 \\
& 25 & & 0.953 & 0.944 & 0.951 & 0.949 & & 0.97 & 0.91 & 0.85 & 0.81 \\
& 50 & & 0.953 & 0.951 & 0.953 & 0.947 & & 0.68 & 0.64 & 0.60 & 0.58 \\
\hline
MLE & 5 & & 0.931 & 0.939 & 0.940 & 0.946 & & 2.10 & 1.99 & 1.88 & 1.80 \\
& 10 & & 0.953 & 0.942 & 0.949 & 0.941 & & 1.51 & 1.42 & 1.334 & 1.28 \\
& 25 & & 0.938 & 0.948 & 0.947 & 0.950 & & 0.96 & 0.90 & 0.85 & 0.81 \\
& 50 & & 0.946 & 0.946 & 0.954 & 0.956 & & 0.68 & 0.64 & 0.60 & 0.57 \\
\hline
Bayes & 5 & & 0.949 & 0.955 & 0.946 & 0.948 & & 2.28 & 2.08 & 1.93 & 1.82 \\
& 10 & & 0.960 & 0.948 & 0.951 & 0.942 & & 1.56 & 1.45 & 1.35 & 1.29 \\
& 25 & & 0.943 & 0.949 & 0.948 & 0.950 & & 0.97 & 0.91 & 0.85 & 0.81 \\
& 50 & & 0.947 & 0.947 & 0.955 & 0.956 & & 0.68 & 0.64 & 0.60 & 0.58 \\
%\hline
\end{tabular}
}
\end{table}

We also did the conditional IM calculations with an alternative decomposition, which took $V_T=U_1$ and $V_H=(0,U_2-U_1,\ldots,U_n-U_1)$.  We were surprised to see that the results obtained with this ``naive'' decomposition were indistinguishable from those shown here based on the arguably more reasonable maximum likelihood-driven decomposition.  This suggests that the particular choice of decomposition may not be so important; instead, it is the conditioning part that seems to matter most; see \citet{fraser2004}.

\subsection{Fisher's problem of the Nile}
\label{SS:nile}

Suppose two independent exponential samples, namely $X_1 = (X_{11},\ldots,X_{1n})$ and $X_2 = (X_{21},\ldots,X_{2n})$, are available, the first with mean $\theta^{-1}$ and the second with mean $\theta$.  The goal is to make inference on $\theta > 0$.  The name comes from an application \citep{fisher1973} to fertility of land in the Nile river valley.  In this example, the maximum likelihood estimate is not sufficient, so conditioning on an ancillary statistic is recommended. 

Sufficiency considerations suggest the following initial dimension reduction step: 
\[ S(X_1) = \theta^{-1} U_1 \quad \text{and} \quad S(X_2) = \theta U_2, \quad U_1,U_2 \sim \gam(n,1), \]
where $S(X_i) = \sum_{j=1}^n X_{ij}$.  But efficiency can be gained by considering a further reduction to a scalar auxiliary variable.  Here we employ the differential equation technique in Section~\ref{SS:diffeq}.  Start with $u_{x,\theta} = (\theta S(x_1), \theta^{-1}S(x_2))^\top$.  Differentiating with respect to $\theta$ reveals that our (real valued) conditioning function $\eta$ must satisfy 
\[ \frac{\del\eta(u)}{\del u} \Bigr|_{u=u_{x,\theta}} \binom{S(x_1)}{-\theta^{-2}S(x_2)} = 0. \]
If we take $\eta(u) = \{u_1u_2\}^{1/2}$, then 
\[ \frac{\del\eta(u)}{\del u} \Bigr|_{u=u_{x,\theta}} = \frac{1}{2\{S(x_1)S(x_2)\}^{1/2}} \bigl(\theta^{-1} S(x_2), \,\theta S(x_1)\bigr) \]
and, clearly, this satisfies the differential equation above.  Therefore, for \eqref{eq:decomposition}, we take 
\begin{equation}
\label{eq:nile3}
H(X) = V_H \quad \text{and} \quad T(X) = \theta V_T, 
\end{equation}
where $T(X) = \{S(X_1)/S(X_2) \}^{1/2}$, $H(X) = \{S(X_1) S(X_2)\}^{1/2}$, $V_T = \{U_1/U_2\}^{1/2}$, and $V_H = \{U_1U_2\}^{1/2}$.  These quantities are familiar from the classical approach: $T(X)$ is the maximum likelihood estimate of $\theta$, $H(X)$ is an ancillary statistic, and the pair $(T,H)(X)$ is a jointly minimal sufficient statistic \citep{ghoshreidfraser2010}.  

By \eqref{eq:nile3} and our general discussion in Section~\ref{SS:cim}, we can focus on a conditional association based on $T(X)=\theta V_T$.  The conditional distribution of $V_T$ given $V_H=h$ is a generalized inverse Gaussian distribution \citep{bn1977} with density function 
\begin{equation}
\label{eq:gig}
f_h(v_T) = \frac{1}{2 v_T K_0(2h)} \exp\{-h (v_T^{-1}+v_T) \}, 
\end{equation}
where $K_0$ is the modified Bessel function of the second kind.  As a final simplifying step, write the conditional association as 
\begin{equation}
\label{eq:nile4}
T(X) = \theta F_h^{-1}(W), \quad W \sim {\sf Unif}(0,1), 
\end{equation}
where $F_h$ is the distribution function corresponding to the density $f_h$ in \eqref{eq:gig}.  This completes the A-step.  If we take $\S$ as in \eqref{eq:default.prs} for the P-step, then the C-step gives
\[ \Theta_{T(x)}(\S) = \Bigl[ \frac{T(x)}{F_h^{-1}(\tfrac12 + |W-\tfrac12|)}, \, \frac{T(x)}{F_h^{-1}(\tfrac12-|W-\tfrac12|)} \Bigr], \quad W \sim \unif(0,1). \]
From this, the conditional belief/plausibility functions are readily evaluated. 

For illustration, we display plausibility functions $\cpl_t(\theta;\S)$ for two conditional IMs.  The first is based on that derived above; the second is based on a similar derivation, but we ignore $V_H$ and simply work with the marginal distribution of $V_T$ in \eqref{eq:nile3}.  Figure~\ref{fig:nile} shows plausibility functions for $T(x)=0.90$, with $n=20$ and true $\theta=1$, sampled from its conditional distribution given $h$, for two different values of $h$.  In this case, if $h$ is large (i.e., $h > n$), then the bona fide conditional IM has narrower level sets than the naive conditional IM.  The opposite is true when $h$ is small (i.e., $h < n$).  This is due to the fact that the conditional Fisher information in $T$ is an increasing function in $h$; see \citet[][Example~1]{ghoshreidfraser2010}.  Therefore, $T$ has more variability when $h$ is small, and this adjustment should be reflected in the plausibility function.  The bona fide conditional IM catches this phenomenon while the naive one does not.

\begin{figure}
\centering
\fbox{
\subfigure[$h=15$]{\scalebox{0.5}{\includegraphics{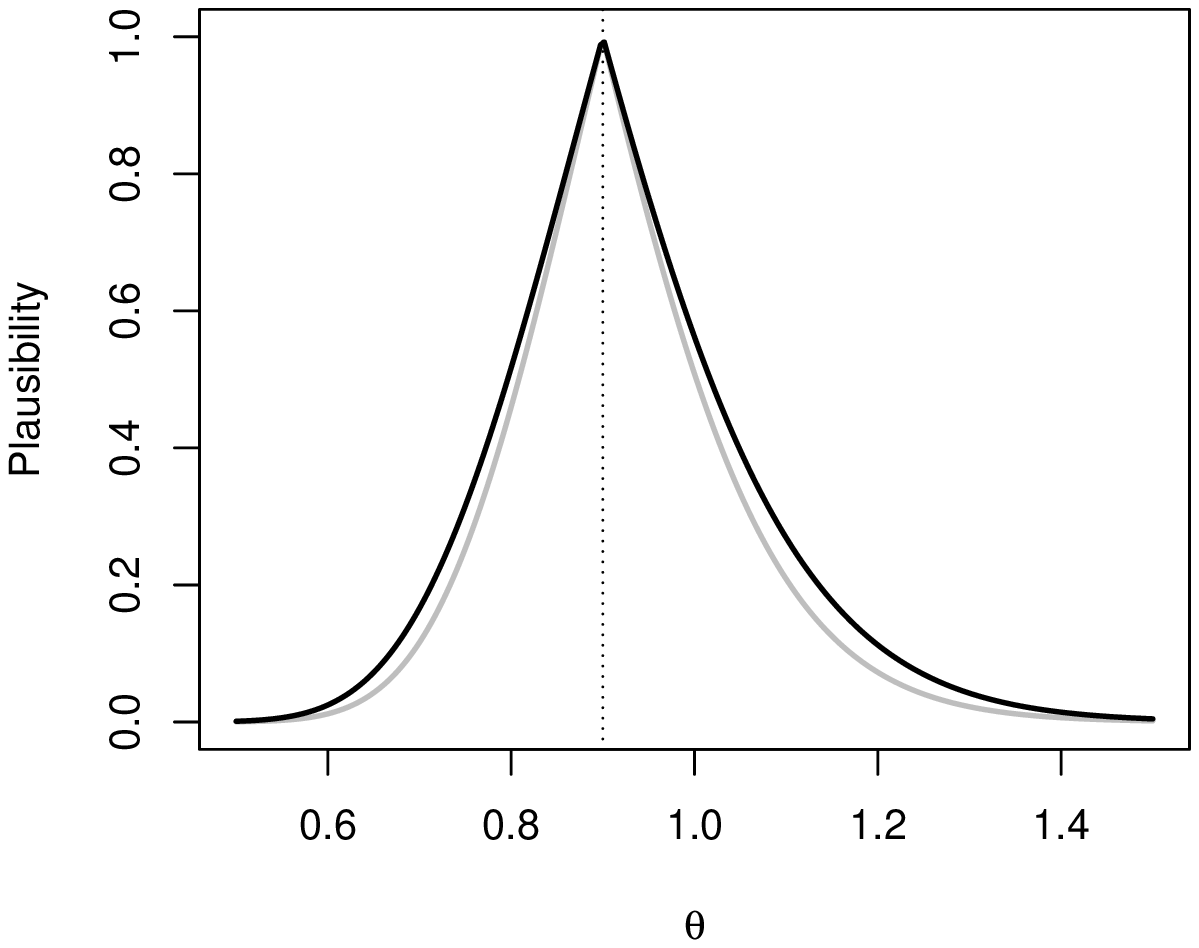}}}
\subfigure[$h=25$]{\scalebox{0.5}{\includegraphics{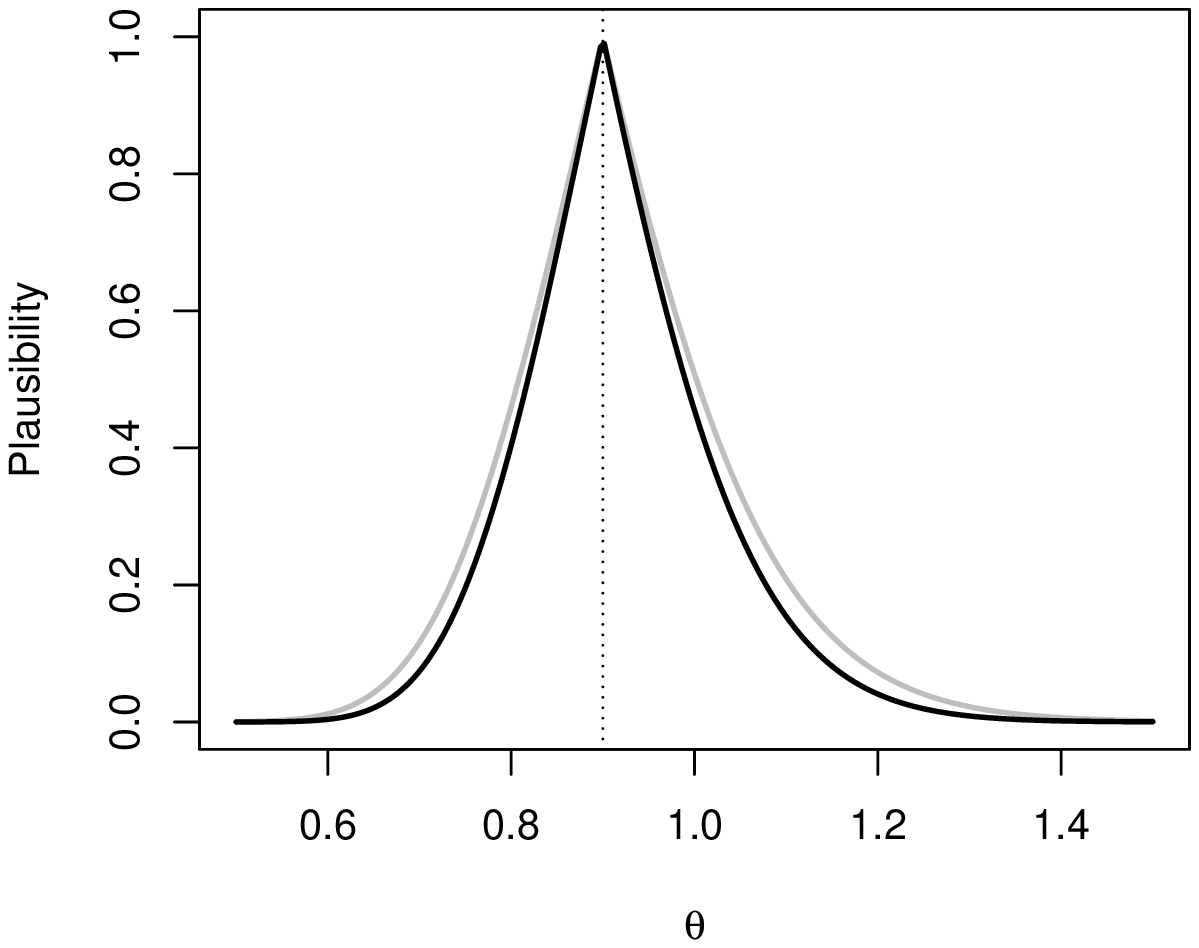}}}
}
\caption{\label{fig:nile} Plausibility functions for the conditional IM (black) and the ``naive'' conditional IM (gray) in the Nile example, with $T=0.90$, $n=20$, and the true $\theta=1$.  Gray curves in the two plots are the same since the naive conditional IM does not depend on $h$.}
\end{figure}

\subsection{A two-parameter gamma problem}
\label{SS:gamma}

Let $X=(X_1,\ldots,X_n)$ be an independent sample from $\gam(\theta_1,\theta_2)$, where $\theta_1 > 0$ and $\theta_2 > 0$ are the shape and scale parameters, respectively, both unknown.  In this case, we may construct a conditional association based on the marginal distribution of the two-dimensional complete sufficient statistic, which we choose to represent as $T_1 = \sum_{i=1}^n X_i$ and $T_2 = n^{-1}\sum_{i=1}^n \log X_i - \log(T_1/n)$.  Then we have a conditional association
\[ T_1 = \theta_2 F_{n\theta_1}^{-1}(U_1) \quad \text{and} \quad T_2 = G_{\theta_1}^{-1}(U_2), \]
where $U_1,U_2$ are independent $\unif(0,1)$, $F_a$ is the distribution function of $\gam(a,1)$, and $G_b$ is some distribution function without a familiar form.  For the P-step, consider an analogue of the default predictive random set \eqref{eq:default.prs}, given by the random square: 
\[ \S = \{(u_1,u_2): \max(|u_1-\tfrac12|,|u_2-\tfrac12|) \leq \max(|U_1-\tfrac12|,|U_2-\tfrac12|)\}, \]
with $U_1,U_2 \iid \unif(0,1)$.  In this case, with observed $t=(t_1,t_2)$, the C-step gives 
\[ \Theta_t(\S) = \{(\theta_1,\theta_2): \max(|F_{n\theta_1}(t_1/\theta_2) - \tfrac12|,|G_{\theta_1}(t_2)-\tfrac12|) \leq \max(|U_1-\tfrac12|,|U_2-\tfrac12|)\}.  \]
From here, we can write down the plausibility function for a singleton assertion:
\[ \cpl_t(\{\theta_1,\theta_2\}; \S) = 1- \max\bigl\{ |2F_{n\theta_1}(t_1/\theta_2) - 1|, \, |2G_{\theta_1}(t_2) - 1| \bigr\}^2. \]
Evaluating $G_{\theta_1}(\cdot)$ requires Monte Carlo but, since $T_2$ is $\theta_2$-ancillary, the same Monte Carlo samples can be used for all candidate $\theta_2$ values.  

For illustration, we simulated a single sample of size $n=25$ from a gamma distribution with shape $\theta_1=7$ and scale $\theta_2=3$.  Figure~\ref{fig:gamma.contour} displays a sample of size 5000 from a Bayesian posterior distribution for $(\theta_1,\theta_2)$ based on Jeffreys' prior.  Also displayed are the 90\% confidence ellipse based on the asymptotic normality of the maximum likelihood estimator, and the 90\% conditional IM plausibility region 
\[ \{(\theta_1,\theta_2): \cpl_t(\{\theta_1,\theta_2\}; \S) > 0.1\}. \]
Besides having guaranteed coverage, the plausibility region captures the non-elliptical shape of the posterior.  For larger $n$, all three regions will have a similar shape.

\begin{figure}
\centering
\fbox{
\scalebox{0.5}{\includegraphics{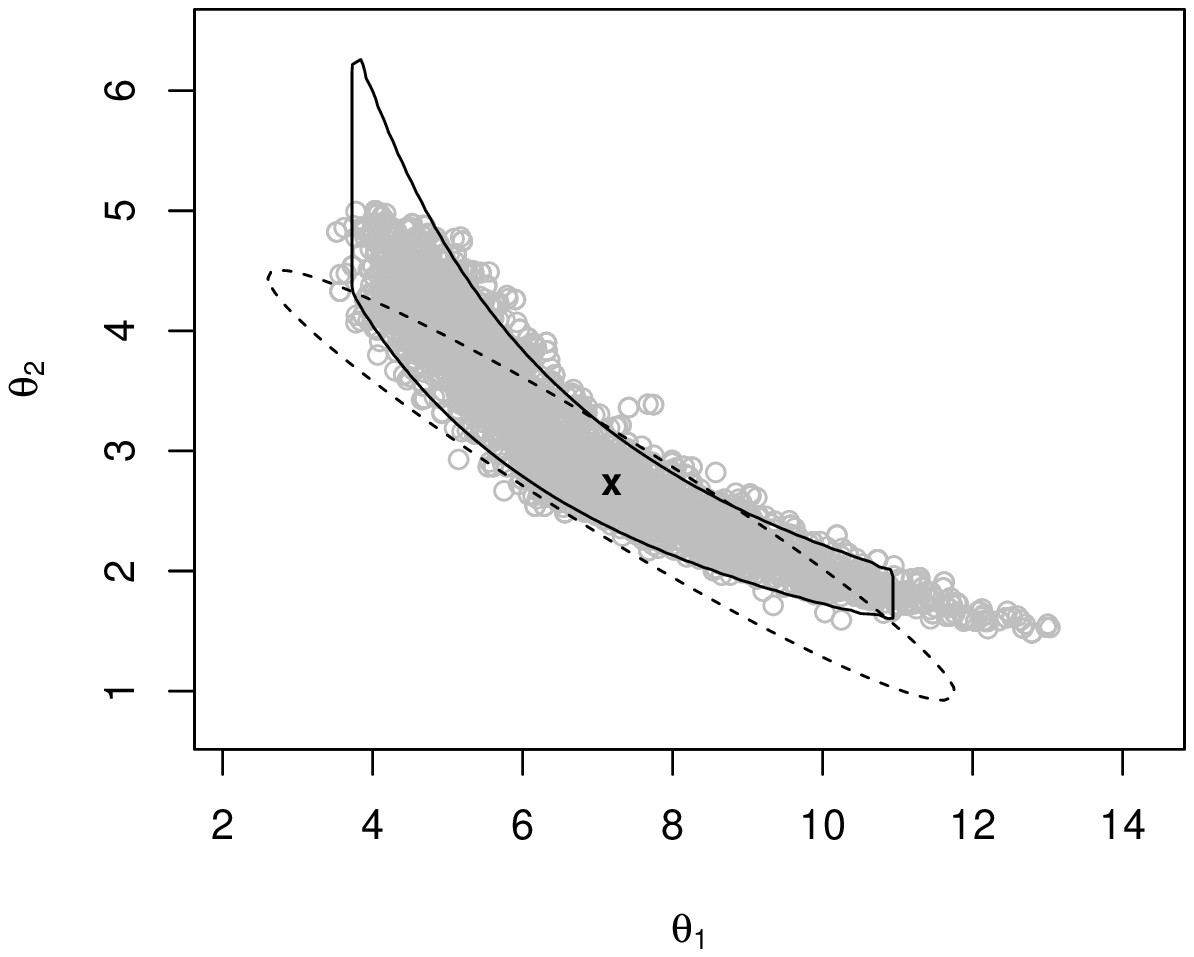}}
}
\caption{\label{fig:gamma.contour} Bayesian posterior sample (gray) based on Jeffreys' prior, the 90\% confidence ellipse based on asymptotic normality of the maximum likelihood estimator (dashed), and the 90\% conditional IM plausibility region.}
\end{figure}

\section{Local conditional IMs}
\label{S:gcim}

\subsection{Motivation: bivariate normal model}
\label{SS:bvn1}

So far we have seen that the conditional IM approach is successful in problems where the baseline association admits a decomposition of the form \eqref{eq:decomposition}.  However, as alluded to above, there are interesting and important problems where apparently no such decomposition exists.  Next is one such problem, which may be considered as a ``benchmark example'' for conditional inference \citep[][Example~5]{ghoshreidfraser2010}.  

Suppose $(X_{11},X_{21}),\ldots,(X_{1n},X_{2n})$ is an independent sample from a standard bivariate normal distribution with zero means, unit variances, but unknown correlation coefficient $\theta \in (-1,1)$.  A natural first step towards inference on $\theta$ is to take advantage of the fact that $X_1+X_2$ and $X_1-X_2$ are independent.  In particular, by defining 
\[ X_1 \gets \frac12 \sum_{i=1}^n (X_{1i}+X_{2i})^2 \quad \text{and} \quad X_2 \gets \frac12 \sum_{i=1}^n (X_{1i}-X_{2i})^2, \]
we may rewrite the baseline association as 
\begin{equation}
\label{eq:bvn.amodel.1}
X_1 = (1+\theta)U_1 \quad \text{and} \quad X_2 = (1-\theta)U_2, \quad U_1,U_2 \sim \chisq(n).
\end{equation}
Sufficiency justifies this first reduction.  Equation \eqref{eq:bvn.amodel.1} is equivalent to
\begin{equation}
\label{eq:bvn.amodel.2}
\frac{X_1}{U_1} + \frac{X_2}{U_2} = 2 \quad \text{and} \quad \frac{X_1}{X_2} = \frac{1+\theta}{1-\theta} \, \frac{U_1}{U_2} . 
\end{equation}
The first equation depends on data and auxiliary variable---free of $\theta$---while the second depends also on $\theta$.  But note that the first expression in \eqref{eq:bvn.amodel.2} is not of the form specified in \eqref{eq:condition}.  Actually, this first expression is of the more general ``non-separable'' form $c(X,U) = 0$ described in Remark~\ref{re:separable}.  So, although \eqref{eq:bvn.amodel.2} provides a suitable decomposition of the baseline association, the requirements of Theorem~\ref{thm:cond.valid} are not met, so the resulting conditional IM may not be valid.  This warrants an alternative approach.  

To elaborate on this last point, observe that the distribution for $\theta$ obtained via the distribution of $(U_1,U_2)$, given $X_1/U_1 + X_2/U_2 = 2$, is exactly a type of fiducial distribution.  As we pointed out in Section~\ref{SS:notation}, conditioning on the full data, $(X_1,X_2)$ in this case, for fixed $\theta$, makes the distribution of $(U_1,U_2)$ degenerate.  Therefore, ``continuing to regard'' $(U_1,U_2)$ as independent chi-squares, given data, may be difficult to justify.

\subsection{Relaxing \eqref{eq:condition} via localization}
\label{SS:localization}

As describe above, the separability in \eqref{eq:condition} can be too strict, but extending the conditional validity theorem to allow non-separablility appears difficult.  The idea here is to relax \eqref{eq:condition} in a different direction.  Specifically, we propose to allow the pair of function $(H,\eta)$ in \eqref{eq:condition} to depend, locally, on the parameter.  This generalization allows us some additional flexibility in finding an auxiliary variable dimension reduction.  

Start by fixing an arbitrary $\theta_0 \in \Theta$.  As in Section~\ref{SS:cim}, consider a pair of functions $(T,H_{\theta_0})$, depending on $\theta_0$, such that $x \mapsto (T(x),H_{\theta_0}(x))$ is one-to-one.  Now take the corresponding functions $u \mapsto (\tau(u), \eta_{\theta_0}(u))$, one-to-one, such that the baseline association, at $\theta=\theta_0$, can be decomposed as 
\begin{equation}
\label{eq:gcim.decomp}
H_{\theta_0}(X) = \eta_{\theta_0}(U) \quad \text{and} \quad T(X) = b(\tau(U), \theta_0). 
\end{equation}
That is, \eqref{eq:gcim.decomp}, with $U \sim \prob_U$, describes the sampling distribution $X \sim \prob_{X|\theta_0}$.  Suppose $H_{\theta_0}(X)=h_0$ is observed.  We can compute the conditional distribution $\prob_{V_T|h_0,\theta_0}$ of $V_T = \tau(U)$ given $\eta_{\theta_0}(U) = h_0$, which is then used to construct predictive random sets.  

From this point, we may proceed exactly as before.  That is, for the A-step, we get sets $\Theta_{T(x)}(v_T) = \{\theta: T(x)=b(v_T,\theta)\}$ just like before.  For the P-step, we pick a conditionally admissible predictive random set $\S \sim \prob_{S|h_0,\theta_0}$.  Finally, the C-step produces conditional the plausibility function 
\[ \cpl_{T(x)|h_0,\theta_0}(A;\S) = 1-\prob_{\S|h_0,\theta_0}\{\Theta_{T(x)}(\S) \subseteq A^c\}, \quad A \subseteq \Theta. \]
We shall refer to the corresponding conditional IM as a \emph{local} conditional IM at $\theta=\theta_0$.  The adjective ``local'' is meant to indicate the dependence of the construction on the particular point $\theta_0$.  As we see below, the validity properties of this local conditional IM are, in a certain sense, also local.

\subsection{Validity of local conditional IMs}
\label{SS:local.valid}

The following theorem shows that for each $\theta_0$ value, the local conditional IM at $\theta_0$ is valid for some important assertions depending on the particular $\theta_0$.  The proof is exactly like that of Theorem~\ref{thm:cond.valid} and, hence, omitted.  

\begin{theorem}
\label{thm:local.valid}
For any $\theta_0$, take $h_0 \in H_{\theta_0}(\XX)$.  Suppose that $\S \sim \prob_{\S|h_0,\theta_0}$ is conditionally admissible.  If $\Theta_{T(x)}(\S) \neq \varnothing$ with $\prob_{\S|h_0,\theta_0}$-probability~1 for all $x$ such that $H_{\theta_0}(x)=h_0$, then the local conditional IM at $\theta_0$ is conditionally valid for $A=\{\theta_0\}$, i.e., 
\[ \prob_{X|\theta_0}\{\cpl_{T(X)|h_0,\theta_0}(\theta_0;\S) \leq \alpha \mid H_{\theta_0}(X) = h_0\} \leq \alpha, \quad \forall \; \alpha \in (0,1). \]
\end{theorem}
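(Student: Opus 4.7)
The plan is to mirror the proof of Theorem~\ref{thm:cond.valid} almost verbatim, but restricting all probability calculations to the single parameter value $\theta=\theta_0$ at which the local conditional IM is built. The local construction is designed precisely so that \emph{at} $\theta=\theta_0$ the data-generating mechanism admits the separable decomposition \eqref{eq:gcim.decomp}, so the non-separability obstruction motivating Section~\ref{SS:localization} disappears for the specific assertion $A=\{\theta_0\}$, and the Theorem~\ref{thm:cond.valid} machinery becomes available locally.

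First I would unpack the event $\{\cpl_{T(X)|h_0,\theta_0}(\theta_0;\S)\leq\alpha\}$ in terms of the auxiliary variable. Under $X\sim\prob_{X|\theta_0}$, equation \eqref{eq:gcim.decomp} yields $H_{\theta_0}(X)=\eta_{\theta_0}(U)$ and $T(X)=b(\tau(U),\theta_0)$ with $U\sim\prob_U$; set $V_T^\star=\tau(U)$. Conditioning on $H_{\theta_0}(X)=h_0$ is exactly conditioning on $\eta_{\theta_0}(U)=h_0$, so $V_T^\star$ has law $\prob_{V_T|h_0,\theta_0}$. By the A-step, $\theta_0\in\Theta_{T(X)}(v_T)$ iff $b(v_T,\theta_0)=T(X)=b(V_T^\star,\theta_0)$; together with the almost-sure non-emptiness hypothesis and the one-to-one structure of $u\mapsto(\tau(u),\eta_{\theta_0}(u))$, this forces $\{\theta_0\in\Theta_{T(X)}(\S)\}=\{V_T^\star\in\S\}$ with probability one. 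Consequently
\[ \cpl_{T(X)|h_0,\theta_0}(\theta_0;\S) = \prob_{\S|h_0,\theta_0}\{V_T^\star\in\S\}. \]

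To finish, I would invoke the standard calibration lemma for admissible predictive random sets. Conditional admissibility says that $\SS_{h_0}$ is nested and that $\prob_{\S|h_0,\theta_0}$ obeys the natural-measure identity \eqref{eq:cond.natural.measure}; these two ingredients force the contour function $\gamma(v_T)=\prob_{\S|h_0,\theta_0}\{v_T\in\S\}$ to satisfy $\gamma(V_T^\star)\succeq\unif(0,1)$ when $V_T^\star\sim\prob_{V_T|h_0,\theta_0}$. This is the argument carried out in the proof of Theorem~\ref{thm:cond.valid}, and at the unconditional level in Theorem~1 of \citet{imbasics}, so I would simply cite it. Combining with the previous display gives
\[ \prob_{X|\theta_0}\{\cpl_{T(X)|h_0,\theta_0}(\theta_0;\S)\leq\alpha \mid H_{\theta_0}(X)=h_0\} = \prob\{\gamma(V_T^\star)\leq\alpha \mid \eta_{\theta_0}(U)=h_0\} \leq \alpha. \]
The one genuine subtlety is the identification step $\{\theta_0\in\Theta_{T(X)}(\S)\}=\{V_T^\star\in\S\}$: it uses both the non-emptiness hypothesis and the fact that \eqref{eq:gcim.decomp} is evaluated at the \emph{same} $\theta_0$ as the data-generating law, which is precisely what makes the conclusion local---for any $\theta\neq\theta_0$ the identity $T(X)=b(V_T^\star,\theta_0)$ fails and the whole chain breaks.
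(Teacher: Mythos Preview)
Your proposal is correct and is essentially the same argument the paper has in mind: the paper simply states that the proof is ``exactly like that of Theorem~\ref{thm:cond.valid} and, hence, omitted,'' and what you have written is precisely that proof specialized to the single point $\theta=\theta_0$, invoking Lemma~\ref{lem:prs.valid} for the stochastic dominance step.  One small simplification: you do not actually need the full equality $\{\theta_0\in\Theta_{T(X)}(\S)\}=\{V_T^\star\in\S\}$; the trivial inclusion $\{V_T^\star\in\S\}\subseteq\{\theta_0\in\Theta_{T(X)}(\S)\}$ already gives $\cpl_{T(X)|h_0,\theta_0}(\theta_0;\S)\geq 1-Q_{\S|h_0,\theta_0}(V_T^\star)$, which is all that is needed, so the injectivity discussion can be dropped.
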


The validity result here is not as strong as in Theorem~\ref{thm:cond.valid}, a consequence of the localization.  It does, however, imply that the local conditional plausibility region, 
\begin{equation}
\label{eq:cond.pl.region}
\{\theta: \cpl_{T(x)|H_\theta(x)}(\theta;\S) > \alpha\}, 
\end{equation}
has the nominal (conditional) $1-\alpha$ coverage probability.  This theoretical result is confirmed by the simulation experiment in Section~\ref{SS:bvn2} below.  Observe that, in the definition of conditional plausibility region \eqref{eq:cond.pl.region}, the plausibility function depends on $\theta$ in two places---in the argument (the assertion) and in the local conditional IM itself.  The latter structural dependence of the IM on the particular assertion is consistent with the optimality developments described in \citet{imbasics}.

\subsection{Bivariate normal model, revisited}
\label{SS:bvn2}

Here we demonstrate that the localization technique can be successfully used to solve the bivariate normal problem described above.  Start with the relation in \eqref{eq:bvn.amodel.1}.  Fix $\theta_0$.  To construct the functions $(H, \eta_{\theta_0})$, depending on $\theta_0$, and the corresponding local conditional IM at $\theta_0$, we shall modify the differential equation approach in Section~\ref{SS:diffeq}.  

In this case, if we let $u_{x,\theta} = (x_1/(1+\theta), x_2/(1-\theta))^\top$, then we have 
\[ \frac{\del u_{x,\theta}}{\del\theta} = \Bigl(-\frac{x_1}{(1+\theta)^2}, \frac{x_2}{(1-\theta)^2}\Bigr)^\top. \]
For a local conditional IM at $\theta_0$, we propose to choose a real-valued $\eta_{\theta_0}(u)$ such that $\del \eta_{\theta_0}(u_{x,\theta})$ vanishes at $\theta=\theta_0$.  If we take 
\begin{equation}
\label{eq:phi}
\eta_{\theta_0}(u) = (1+\theta_0)\log u_1 + (1-\theta_0)\log u_2, 
\end{equation}
then 
\[ \frac{\del\eta_{\theta_0}(u)}{\del u} = \Bigl(\frac{1+\theta_0}{u_1}, \,\frac{1-\theta_0}{u_2}\Bigr), \]
so the derivative of $\psi_{H_0}(u_{x,\theta})$ with respect to $\theta$ is 
\begin{align*}
\frac{\del\eta_{\theta_0}(u_{x,\theta})}{\del\theta} & = \frac{\del\eta_{\theta_0}(u)}{\del u} \Bigr|_{u=u_{x,\theta}} \cdot \frac{\del u_{x,\theta}}{\del\theta} \\
& = -\frac{(1+\theta_0)^2}{x_1} \cdot \frac{x_1}{(1+\theta)^2} + \frac{(1-\theta_0)^2}{x_2} \cdot \frac{x_2}{(1-\theta)^2} \\
& = -\frac{(1+\theta_0)^2}{(1+\theta)^2} + \frac{(1-\theta_0)^2}{(1-\theta)^2}.  
\end{align*}
The latter expression clearly evaluates to zero at $\theta=\theta_0$, so $\eta_{\theta_0}$ satisfies the desired differential equation.  The corresponding function $H(x)=H_{\theta_0}(x)$ is given by 
\[ H_{\theta_0}(x) = (1+\theta_0) \log\{x_1/(1+\theta_0)\} + (1-\theta_0)\log\{x_2 / (1-\theta_0)\}. \]
For the local conditional association---the second expression in \eqref{eq:gcim.decomp}---we take 
\[ T(X) = z(\theta) + V_T, \]
where $T(x) = \log(x_1/x_2)$, $z(\theta) = \log\{(1+\theta)/(1-\theta)\}$, and $V_T=T(U)$.  Then $\prob_{V_T|\theta_0,h_0}$ is the conditional distribution of $V_T$, given $(\theta_0,h_0)$, where $h_0$ is the observed $H_{\theta_0}(X)=H_{\theta_0}(x)$.  This conditional distribution has a density, given by 
\[ f_{h_0,\theta_0}(v_T) \propto \exp\bigl\{ -n\theta_0 v_T/2 - \cosh(v_T/2) e^{(h_0-\theta_0 v_T)/2} \bigr\}. \]
If we let $F_{h_0,\theta_0}$ denote the corresponding distribution function, then we can describe this conditional association model by
\[ T(X) = z(\theta) + F_{h_0,\theta_0}^{-1}(W), \quad W \sim {\sf Unif}(0,1). \]
If, for the P-step, we use the predictive random set $\S$ in \eqref{eq:default.prs}, then the local conditional plausibility function is
\[ \cpl_{T(x)|h_0,\theta_0}(\theta_0; \S) = 1-\bigl| 1 - 2 F_{h_0,\theta_0}(T(x)-z(\theta_0)) \bigr|. \]
A local conditional $100(1-\alpha)$\% plausibility interval for $\theta$ can be found just as before, by thresholding the plausibility function at $\alpha$.  It follows from Theorem~\ref{thm:local.valid} that these intervals will have the nominal coverage probabilities.  

For illustration, we consider a simple simulation example.  We compute the local conditional 90\% plausibility interval for $\theta$ in for 5000 Monte Carlo samples where, in each case, the true $\theta$ is sampled from $\{0.0, 0.3, 0.6, 0.9\}$.  For several values of $n$, the estimated coverage probabilities and expected lengths are compared, in Table~\ref{table:bvn}, to those of the conditional frequentist interval based on the so-called ``$r^\star$'' approximation due to \citet{bn1986} and \citet{fraser1990}, summarized nicely in \citet{reid1995, reid2003}, and a Bayesian credible interval based on Jeffreys prior.  The general message is that, compared to the other methods, the local conditional IM intervals have exact coverage for all $n$, though the intervals appear to be slightly longer on average when $n$ is small.  But when $n$ is moderate or large, there is no apparent difference in the performance.  Since one cannot hope to do much better than the Jeffreys' prior Bayes intervals for large $n$, we see that the local conditional IM results are at least asymptotically efficient, along with being valid for all $n$.  

\begin{table}
\caption{\label{table:bvn} Coverage probabilities and expected lengths of 90\% interval estimates for $\theta$ in the bivariate normal problem based on, respectively, the local conditional IM (LCIM), the $r^\star$ approach reviewed by \citet{reid2003}, and a Jeffreys prior Bayes approach.}
\centering
\fbox{
\begin{tabular}{rcccccccc}
%\hline 
&& \multicolumn{3}{c}{Coverage probability} && \multicolumn{3}{c}{Expected length} \\
\cline{3-5} \cline{7-9}
$n$ && LCIM & $r^\star$ & Bayes && LCIM & $r^\star$ & Bayes \\
\hline 
10 && 0.896 & 0.845 & 0.880 && 0.66 & 0.61 & 0.62 \\
25 && 0.895 & 0.867 & 0.883 && 0.42 & 0.40 & 0.41 \\
50 && 0.907 & 0.897 & 0.907 && 0.30 & 0.30 & 0.30 \\
100 && 0.903 & 0.888 & 0.896 && 0.21 & 0.21 & 0.21 \\
%\hline
\end{tabular}
}
\end{table}

\subsection{A normal variance components model}
\label{SS:variance.components}

Consider the following standard two variance components model, 
\[ Y^{(g)} = (Y_{g1},\ldots,Y_{gn_g})^\top \sim \nm_{n_g}(\mu 1_{n_g}, \theta_\eps I_{n_g} + \theta_\alpha J_{n_g}), \]
independent across $g=1,\ldots,G$.  Here $G$ is the number of treatments, and $n_g$ is the number of replications under treatment $g$.  Not all $n_g$ are equal, so this is an unbalanced design.  The parameter of interest is $\theta=(\theta_\alpha,\theta_\eps)$, the variance components.  This model corresponds to the marginal distribution of the response in a simple one-way random effects model, 
\[ Y = \mu 1_n + Z\alpha + \eps, \]
where $n=\sum_{g=1}^G n_g$ is the total sample size, $Y$ is a $n$-vector obtained by stacking the $Y^{(g)}$s, $Z$ is a $n \times G$ binary matrix such that $\E(Y^{(g)} \mid \alpha_g) = (\mu + \alpha_g)1_{n_g}$, the random effects $\alpha_1,\ldots,\alpha_G$ are iid $\nm(0,\theta_\alpha)$, and $\eps$ is a $n$-vector of independent $\nm(0,\theta_\eps)$ noise.  These models are very useful in problems where variability comes from two sources.  The goal is to make inference on these two sources of variation.  

The common mean $\mu$ is a nuisance parameter, which we will eliminate with a transformation; the more general mixed-effects model case where $\mu$ is a linear function of some fixed covariates can be handled similarly.  This marginalization can be justified within the IM framework; see \citet{immarg}.  Our setup here is like that in \citet{e.hannig.iyer.2008}; the more general case, with more than two variance components, as in \citet{cisewski.hannig.2012}, shall be considered elsewhere.  

Following \citet{olsen.seely.birkes.1976}, let $K$ be a $n \times (n-1)$ matrix such that $K^\top K = I_{n-1}$ and $KK^\top = I_n - n^{-1} 1_n1_n^\top$.  Find the matrix $M = K^\top Z Z^\top K$ and let $\lambda_1 > \cdots > \lambda_L \geq 0$ be the distinct eigenvalues of $M$; let $r_\ell$ be the multiplicity of $\lambda_\ell$, $\ell=1,\ldots,L$.  Take $P=[P_1,\ldots,P_L]$ a $(n-1) \times (n-1)$ orthogonal matrix, such that $P^\top M P$ is a diagonal matrix with the eigenvalues, in their multiplicities, fall on the diagonal.  Here $P_\ell$, which corresponds to $\lambda_\ell$, is a $(n-1) \times r_\ell$ matrix.  Define
\[ X_\ell = Y^\top K P_\ell P_\ell^\top K^\top Y, \quad \ell=1,\ldots,L. \]
Then $(X_1,\ldots,X_L)$ are minimal sufficient for $\theta=(\theta_\alpha,\theta_\eps)$, and they satisfy the distributional equations
\[ X_\ell = (\lambda_\ell \theta_\alpha + \theta_\eps) U_\ell, \quad \ell=1,\ldots,L, \]
where $U_1,\ldots,U_L$ are independent, with $U_\ell \sim \chisq(r_\ell)$.  In our case of an unbalanced one-way random effects model, we know that $L$ is 1 plus the number of distinct group sample sizes $n_g$.  Thus, $L > 2$, and since the parameter of interest $\theta$ is two-dimensional, there is room to reduce the auxiliary variable down further from $L$ to 2.  To accomplish this, we shall employ the differential equation-based technique proposed above.  To make some connection to the original $Y$ sample, note that $\lambda_L=0$ and $X_L = \sum_{g=1}^G \sum_{j=1}^{n_g} (Y_{gj}-\bar Y_{g\cdot})^2$ is the usual error sum of squares.  The other $X_\ell$'s, for $\ell=1,\ldots,L-1$, are also sums of squares, but these are less familiar than $X_L$.   

To start, for a given $X=x$ and $\theta$, we can solve for $u$ in the above association:
\[ u_{x,\theta,\ell} = \frac{x_\ell}{\lambda_\ell \theta_\alpha + \theta_\eps}, \quad \ell=1,\ldots,L-1, \quad u_{x,\theta,L} = \frac{x_L}{\theta_\eps}. \]
Differentiating this expression with respect to both components of $\theta$ gives an $L \times 2$ matrix $\del u_{x,\theta} / \del\theta = \diag\{u_{x,\theta}\}W(\theta)$, where the rows of $W(\theta)$ are given by
\[ w_\ell(\theta) =  \Bigl( -\frac{\lambda_\ell}{\lambda_\ell \theta_\alpha + \theta_\eps}, \, -\frac{1}{\lambda_\ell \theta_\alpha + \theta_\eps} \Bigr), \quad \ell=1,\ldots,L-1, \quad w_L(\theta) = (0, -\theta_\eps^{-1}). \]
Choose an arbitrary localization point $\theta_0 = (\theta_{0\alpha}, \theta_{0\eps})$.  The goal is to find a function $\eta_{\theta_0}(u)$ that satisfies 
\begin{equation}
\label{eq:vc.diffeq}
\underset{(L-2) \times L}{\frac{\del \eta_{\theta_0}(u)}{\del u} \Bigr|_{u=u_{x,\theta}}} \cdot \underset{L \times L}{\diag\{u_{x,\theta}\}} \cdot \underset{L \times 2}{W(\theta)} = \underset{(L-2) \times 2}{0} \qquad \text{at $\theta=\theta_0$}. 
\end{equation}
The method of characteristics \citep{polyanin2002} suggests the logarithmic function 
\[ \eta_{\theta_0}(u)^\top = \bigl( \log u_1 , \cdots , \log u_L \bigr) \Pi(\theta_0)^\top, \]
where $\Pi(\theta_0)$ is a $(L-2) \times L$ matrix with rows orthogonal to the columns of $W(\theta_0)$.  Since $\Pi(\theta_0)W(\theta_0)$ vanishes, it is easy to check that \eqref{eq:vc.diffeq} holds for this $\eta_{\theta_0}$.  Then the corresponding $H_{\theta_0}(x)$ satisfies 
\[ H_{\theta_0}(x)^\top =  \Bigl( \log\frac{x_1}{\lambda_1 \theta_{0\alpha} + \theta_{0\eps}} , \cdots, \log\frac{x_{L-1}}{\lambda_{L-1} \theta_{0\alpha} + \theta_{0\eps}}, \log\frac{x_L}{\theta_{0\eps}} \Bigr) \Pi(\theta_0)^\top. \]
Take two orthogonal $L$-vectors which are not orthogonal to the columns of $W(\theta_0)$.  One of these vectors should be $(0,\ldots,0,1)$, so that one component of the conditional association will be free of $\theta_\alpha$.  The other vector can be, say, $(1,\ldots,1,0)$.  Then define the mapping $\tau(u)$, taking values in $\RR^2$, via the equation
\[ \begin{pmatrix} \eta_{\theta_0}(u) \\ \tau(u) \end{pmatrix} = \begin{pmatrix} \Pi(\theta_0) \\ 1 \; \cdots \; 1 \; 0 \\ 0 \; \cdots \; 0 \; 1 \end{pmatrix} \begin{pmatrix} \log u_1 \\ \vdots \\ \log u_L \end{pmatrix}. \]
This, in turn, defines the two-dimensional $(T_1,T_2)(x)$ to be used in the conditional association.  In particular, the conditional association is given by
%\begin{align*}
\[ \sum_{\ell=1}^{L-1} \log X_\ell = \sum_{\ell=1}^{L-1} \log(\lambda_\ell \theta_\alpha + \theta_\eps) + \sum_{\ell=1}^{L-1} \log U_\ell, \qquad \log X_L = \log\theta_\eps + \log U_L, \]
%\end{align*}
and we set $T_1=\sum_{\ell=1}^{L-1} \log X_\ell$, $T_2=\log X_L$, $\tau(U)_1 = \sum_{\ell=1}^{L-1} \log U_\ell$, and $\tau(U)_2=\log U_L$.  Furthermore, since this representation is linear on the log scale, and $U_1,\ldots,U_L$ are independent chi-square, the conditional distribution of $\tau(U)$, given $\eta_{\theta_0}(U) = H_{\theta_0}(x)$, can be readily found numerically.  Samples from this conditional distribution, and of the corresponding predictive random set, can be obtained via Markov chain Monte Carlo.  

Details on efficient implementation of the IM-based solution to this important problem, along with comparisons with other methods, will be presented elsewhere.  Here we only give a brief illustration.  We simulate data from the one-way random effects model above, with $\theta=(1,1)$, $\mu=0$, $G=5$, and group sample sizes$(4,4,4,8,48)$; this configuration is one considered in \citet[][Section~4]{e.hannig.iyer.2008}, having moderate degree of unbalance.  A box plot of the data, in Figure~\ref{fig:vc}(a), shows evidence that suggests $\theta_\alpha > 0$.  Figure~\ref{fig:vc}(b) shows the 90\% local conditional IM plausibility region for $(\log\theta_\alpha,\log\theta_\eps)$ which, in this case, contains the true parameter value $(0,0)$.  This region is computed by simulating from the conditional distribution of $\tau(U)$, given $\eta_{\theta_0}(U)=H_{\theta_0}(x)$, for each $\theta_0$, with a random walk Metropolis--Hastings procedure.  The predictive random set used here is an ellipse in the $\tau(U)$-space, a $L_2$ generalization of the default predictive random set \eqref{eq:default.prs}, with an elasticity feature to avoid conflict \citep{leafliu2012}.  For comparison, we also display the contours of the fiducial density for $(\log\theta_\alpha,\log\theta_\eps)$ as given in \citet{e.hannig.iyer.2008}.  This indicates that the plausibility region shape is roughly consistent with the fiducial distribution, though efficiency comparisons remain to be worked out.

\begin{figure}
\centering
\fbox{
\subfigure[Box plot of the simulated data.]{\scalebox{0.5}{\includegraphics{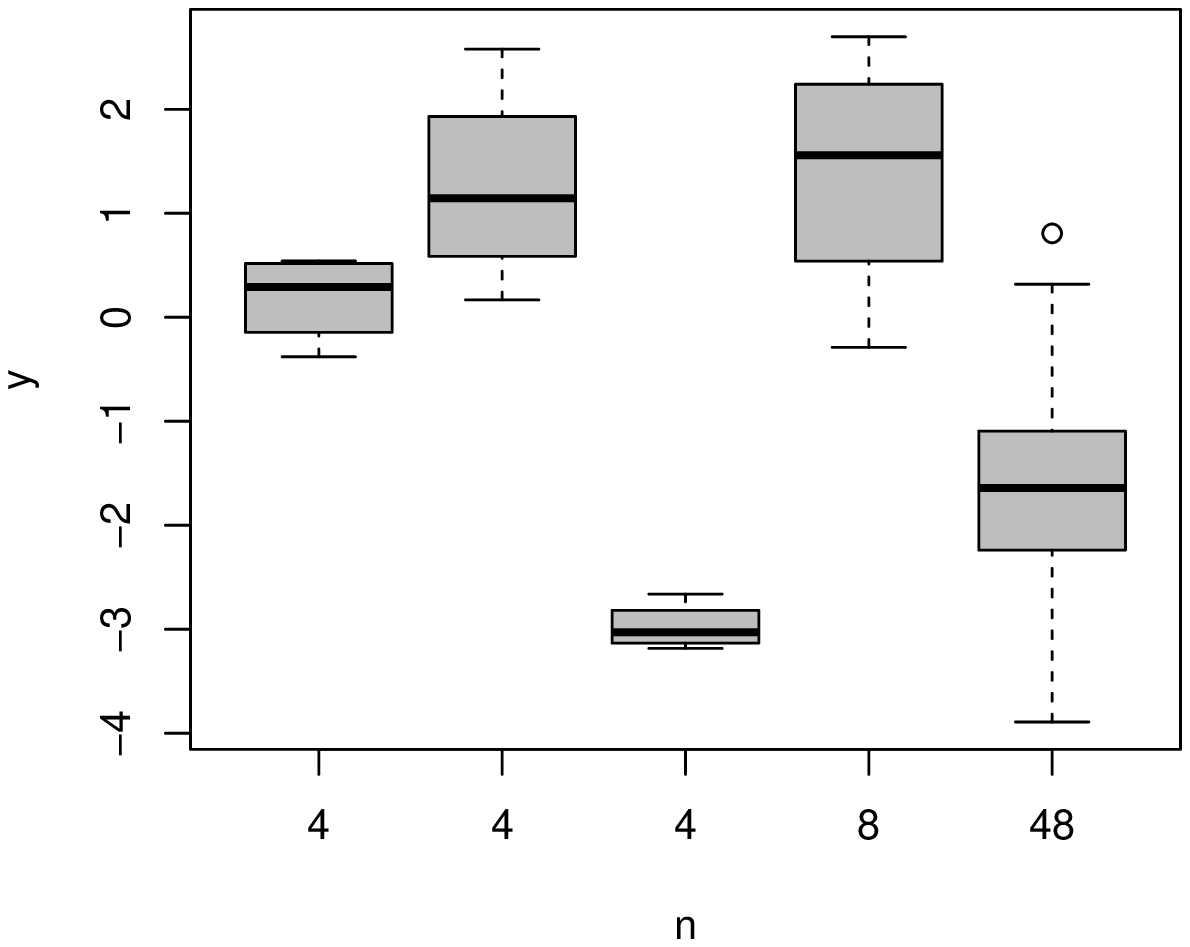}}}
\subfigure[90\% plausibility region]{\scalebox{0.5}{\includegraphics{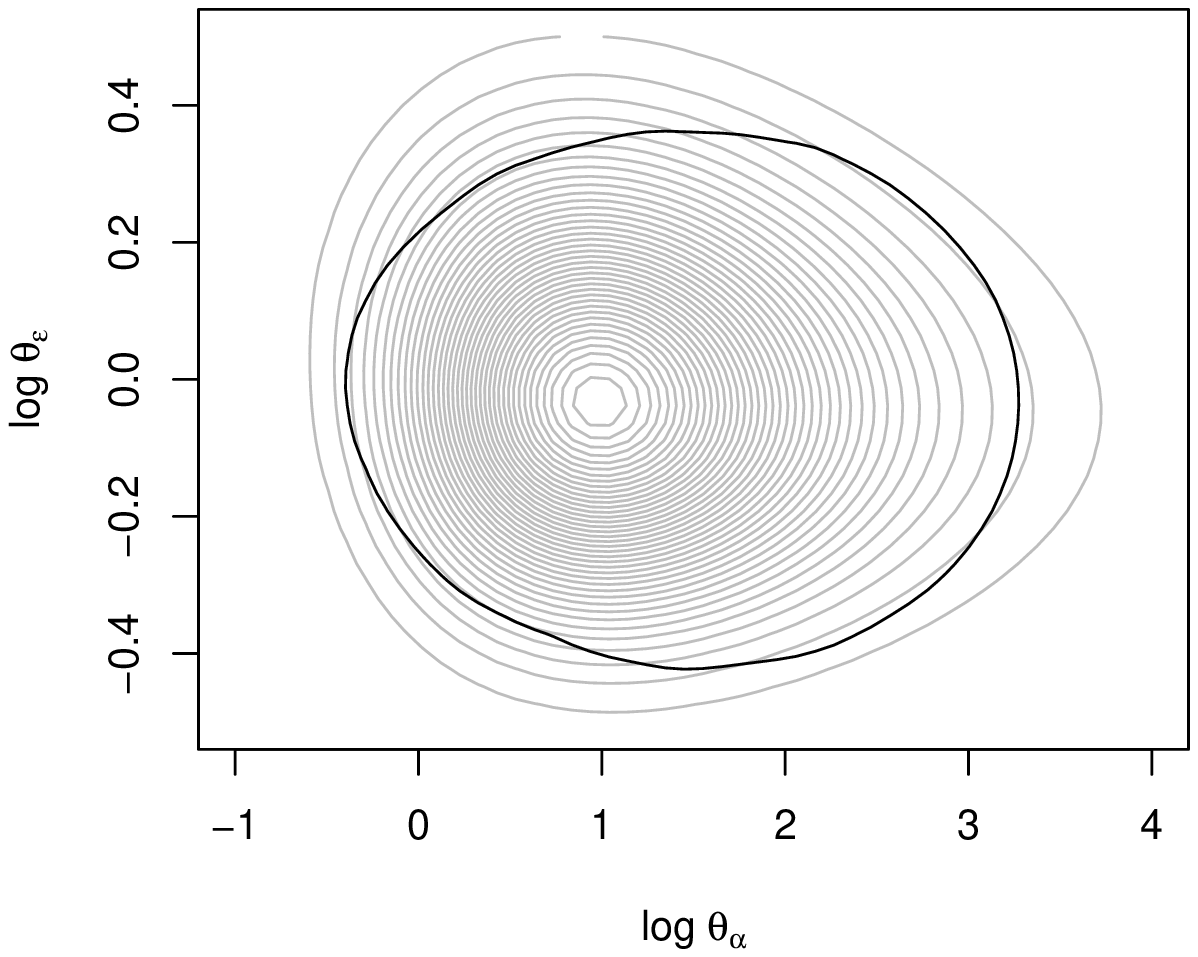}}}
}
\caption{\label{fig:vc} Variance components model results.  Panel~(b) shows the 90\% plausibility region for $(\log\theta_\alpha, \log\theta_\eps)$, along with contours of the corresponding fiducial density.  }
\end{figure}

\ifthenelse{1=1}{}{
%Data from the variance components model simulation
> o$y
[[1]]
[1]  0.54141616  0.09013967  0.49293802 -0.37993473

[[2]]
[1] 0.1672459 1.0059731 1.2840267 2.5790926

[[3]]
[1] -3.184224 -3.081970 -2.974039 -2.661569

[[4]]
[1]  1.6144432  2.6974706  0.7447821  1.5032177  0.3359234  2.1095797  2.3714177
[8] -0.2886965

[[5]]
 [1] -1.39330678 -2.23811804 -2.93879057  0.30171837 -0.23902420 -3.89165357
 [7] -2.08019299 -1.83388115 -2.27917947 -1.38549644 -2.07864959 -1.36929425
[13] -1.61689209 -1.44808891 -0.63009903 -1.52609886 -2.06044181 -2.23916853
[19] -1.82055014 -0.64454870 -1.68354872 -2.48352910 -0.86196571 -1.14072965
[25] -0.98005622 -2.83419944 -0.64228204  0.80674963 -1.02605427 -3.79719287
[31] -3.56076212 -1.66729200 -2.48173891  0.04844312 -2.77707819 -1.13054611
[37] -1.50251792 -2.53070621 -1.52032184 -1.69329656 -1.10663243 -1.95309921
[43] -1.99874567 -1.35706648  0.31776222 -3.16746843 -1.86167492 -1.08127791
}

\section{Discussion}
\label{S:discuss}

This paper extends the basic IM framework laid out in \citet{imbasics} by developing an auxiliary variable dimension reduction strategy.  This reduction simultaneously accomplishes two goals.  First, it provides a suitable combination of information across samples, and we argue in Remarks~\ref{re:sufficiency} and \ref{re:bayes} in Section~\ref{SS:remarks} that Fisher's concept of sufficiency and Bayes' theorem can both be viewed as special cases of this combination of information via conditioning.  Second, this reduction makes construction of efficient predictive random sets considerably simpler.  A new differential equation technique is proposed by which an auxiliary variable dimension reduction can be found even in cases where sufficiency  fails to give a satisfactory reduction.  In addition, as our simulation results in Sections~\ref{SS:student} and \ref{SS:bvn2} demonstrate, even with a default choice of predictive random set, the conditional IMs are as good or better than those standard likelihood and Bayes methods.  This suggests that our proposed method of combining information is efficient.  We expect that the conditional IM approach, paired with the optimal predictive random sets, will have even better performance.  However, more work is needed on efficient computation of these optimal predictive random sets.  

The local conditional IMs considered in Section~\ref{S:gcim} are an important contribution.  Indeed, these tools provide a means to reduce the effective dimension even in cases where the minimal sufficient statistic has dimension greater than that of the parameter.  For example, in the variance-components problem in Section~\ref{SS:variance.components}, we identified a one-dimensional auxiliary variable to predict, even though there is no dimension reduction that can be achieved via sufficiency.  The idea of focusing on validity locally at a single $\theta=\theta_0$ itself seems to provide an improvement, this is, in fact, a special case of a more general idea.  One could measure locality by a general assertion $A$, not necessarily a singleton $A=\{\theta_0\}$.  In this way, one can develop a conditional IM that focuses on validity at a particular assertion $A$, thus extending the range of application of local conditional IMs.  Even this latter extension is a special case of a more general idea, where associations are based on generic functions of $(X,\theta,U)$, not necessarily exact formulations of the sampling model.  This new idea will be explored elsewhere.   

The examples in this paper focused on continuous distributions.  Efficient inference in discrete problems is challenging in any framework, and IMs are no different.  For nice discrete problems, e.g., regular exponential families, the IM analysis described herein can be carried out without difficulty.  However, when sufficiency considerations alone provide inadequate auxiliary variable dimension reduction, new tools are needed.  %In particular, the differential equation-based technique used above may not be applicable because the baseline association involves inequalities rather than equalities.  But perhaps by using discrete auxiliary variables, as opposed to continuous ones, it may be possible to refine this differential equation-driven technique for application in discrete data problems.  Further investigation along these lines is needed.  

Here, the goal was to combine information about a single quantity coming from different sources, and conditioning is shown to be the right tool.  There are other cases, however, where dimension reduction is needed because the real quantity of interest is some lower-dimensional characteristic of the full unknown parameter.  For these nuisance parameter problems, a different sort of dimension reduction is needed.  The companion paper \citep{immarg} deals with marginalization from an IM point of view.

\section*{Acknowledgments}

This work is partially supported by the U.S. National Science Foundation, grants DMS-1007678, DMS-1208833, and DMS-1208841.  The authors thank Dr.~Jing-Shiang Hwang for comments on an earlier draft, as well as the helpful suggestions given by the Editor and the anonymous Associate Editor and referees.

\appendix

\section{Proofs}
\label{S:proofs}

\begin{proof}[Proof of Proposition~\ref{prop:conditioning}]
For the given predictive random set $\S$ for $U$ in the baseline association, the corresponding random set $\Theta_x(\S)$ in the C-step can be written as 
\begin{align*}
\Theta_x(\S) & = \bigcup_{u \in \S} \{\theta: T(x) = b(\tau(u), \theta), \, H(x)=\eta(u)\} \\
& = \bigcup_{u \in \S} \bigl[ \{\theta: T(x) = b(\tau(u),\theta)\} \cap \{\theta: H(x)=\eta(u)\} \bigr] \\
& = \bigcup_{u \in {\cal R}_{H(x)}} \{\theta: T(x) = b(\tau(u), \theta)\} \\
& = \Theta_{T(x)}( \tau({\cal R}_{H(x)}) ),
\end{align*}
where ${\cal R}_{H(x)} = \S \cap \{u: \eta(u)=H(x)\}$.  Next, set $\S_{H(x)}=\tau({\cal R}_{H(x)})$ and let $\S_{H(x)}$ have the distribution it inherits from $\S$ through the mapping just described.  It is important to note that the distribution of $\S_{H(x)}$ is not a conditional distribution of, e.g., $\S$ given that $\S \cap \{u: \eta(u)=H(x)\} \neq \varnothing$, etc; it is a well-defined distribution obtained via the mapping $\S \mapsto \tau(\S \cap \{u: \eta(u)=H(x)\})$.  We have shown that $\Theta_x(\S) = \Theta_{T(x)}(\S_{H(x)})$ with $\prob_\S$-probability~1 for all $x$, so 
\[ \Theta_x(\S) \neq \varnothing \iff \Theta_{T(x)}(\S_{H(x)}) \neq \varnothing \quad \text{and} \quad \Theta_x(\S) \subseteq A \iff \Theta_{T(x)}(\S_{H(x)}) \subseteq A. \]
Since $\prob_\S\{\Theta_x(\S) \neq \varnothing\} > 0$ for all $x$, the two belief functions 
\begin{align*}
\bel_x(A;\S) & = \prob_\S\{\Theta_x(\S) \subseteq A \mid \Theta_x(\S) \neq \varnothing\}, \\
\cbel_{T(x)}(A; \S_{H(x)}) & = \prob_\S\{\Theta_{T(x)}(\S_{H(x)}) \subseteq A \mid \Theta_{T(x)}(\S_{H(x)}) \neq \varnothing\}, 
\end{align*}
are well-defined conditional probabilities, i.e., no Borel paradox issues, and, moreover, must be equal for all $x$ and all assertions $A$, as the proposition claimed.  
\end{proof}

\begin{lemma}
\label{lem:prs.valid}
Fix $h \in H(\XX)$ and take conditionally admissible $\S \sim \prob_{\S|h}$ as in Section~\ref{SS:condcred}.  Write $Q_{\S|h}(v_T) = \prob_{\S|h}\{\S \not\ni v_T\}$.  Then $Q_{\S|h}(V_T)$ is stochastically no larger than $\unif(0,1)$ for $V_T \sim \prob_{V_T|h}$.  
\end{lemma}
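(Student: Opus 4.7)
The plan is to adapt the validity argument for admissible predictive random sets in \citet{imbasics} to the conditional setting. Conditioning on $h$ plays no essential role: once $h$ is fixed, the pair $(\prob_{V_T|h}, \prob_{\S|h})$ satisfies the same admissibility relation \eqref{eq:cond.natural.measure} as in the unconditional case, so the usual argument carries over verbatim. The statement itself is simply the stochastic ordering inequality $\prob_{V_T|h}\{Q_{\S|h}(V_T) \leq \alpha\} \leq \alpha$ for $\alpha \in (0,1)$.

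First, I would apply \eqref{eq:cond.natural.measure} with $K = \VV_T \setminus \{v_T\}$ to rewrite
\[ Q_{\S|h}(v_T) = \sup_{S \in \SS_h:\, v_T \notin S} \prob_{V_T|h}(S), \]
which identifies $Q_{\S|h}(v_T)$ with the ``entry level'' of $v_T$ into the nested family $\SS_h$: the smaller $Q_{\S|h}(v_T)$, the more of the family contains $v_T$. This is the only place where admissibility enters the argument.

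Next, fix $\alpha \in (0,1)$ and consider the level set $\{v_T: Q_{\S|h}(v_T) \leq \alpha\}$. By the display above and contraposition, $Q_{\S|h}(v_T) \leq \alpha$ forces every $S \in \SS_h$ with $\prob_{V_T|h}(S) > \alpha$ to contain $v_T$. Hence
\[ \{v_T: Q_{\S|h}(v_T) \leq \alpha\} \;\subseteq\; \bigcap\bigl\{S \in \SS_h: \prob_{V_T|h}(S) > \alpha\bigr\}. \]
Since $\SS_h$ is nested, this intersection is the limit of a decreasing sequence $S_n \in \SS_h$ with $\prob_{V_T|h}(S_n) \downarrow \alpha$, so continuity of probability gives $\prob_{V_T|h}$-measure at most $\alpha$ for the right-hand side and therefore for the left-hand side as well. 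This is precisely the required stochastic ordering.

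The main technical obstacle is ensuring that the final continuity step is justified, which requires the nested family $\SS_h$ to be rich enough that probabilities can be approached from above at $\alpha$. This is implicit in the admissibility setup adopted throughout the paper and in \citet{imbasics}: closedness of the members of $\SS_h$ together with the continuous nature of $\prob_{V_T|h}$ in the examples driving this paper eliminates gaps in the achievable probability values. In the general case one either strengthens the statement to the one-sided form $\prob_{V_T|h}\{Q_{\S|h}(V_T) < \alpha\} \leq \alpha$ or regularizes the family, mirroring the treatment of atoms in the standard probability integral transform.
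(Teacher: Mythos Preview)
Your argument contains the right set-theoretic identity but you have the inequality running in the wrong direction throughout.  ``$Q_{\S|h}(V_T)$ is stochastically no larger than $\unif(0,1)$'' means $\prob_{V_T|h}\{Q_{\S|h}(V_T)\le\alpha\}\ge\alpha$ for every $\alpha\in(0,1)$; equivalently $\prob_{V_T|h}\{Q_{\S|h}(V_T)\ge 1-\alpha\}\le\alpha$, which is exactly how the lemma is consumed in the proof of Theorem~\ref{thm:cond.valid}.  You instead set out to prove $\prob_{V_T|h}\{Q_{\S|h}(V_T)\le\alpha\}\le\alpha$, the opposite ordering.

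Your inclusion $\{v_T:Q_{\S|h}(v_T)\le\alpha\}\subseteq\bigcap\{S\in\SS_h:\prob_{V_T|h}(S)>\alpha\}$ is correct (and is in fact an equality, since $Q_{\S|h}(v_T)>\alpha$ iff some $S$ with $\prob_{V_T|h}(S)>\alpha$ misses $v_T$).  The slip is in the final step: the intersection on the right is a nested intersection of sets each having $\prob_{V_T|h}$-mass strictly greater than $\alpha$, so continuity gives $\prob_{V_T|h}$-measure \emph{at least} $\alpha$, not at most $\alpha$.  That immediately yields $\prob_{V_T|h}\{Q_{\S|h}(V_T)\le\alpha\}\ge\alpha$, which is precisely the required statement, and no ``gap'' or regularization issue arises---the bound $\ge\alpha$ holds regardless of whether $\alpha$ is attained by some $S\in\SS_h$.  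So the ``technical obstacle'' you flag in the last paragraph is an artifact of aiming for the wrong inequality; once the direction is corrected the argument is complete, and it is essentially the proof referenced by the paper from \citet{imbasics, imbasics.c}.
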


\begin{proof}
Just like that of Theorem~1$'$ in \citet{imbasics.c}.
%The goal is to show that $\prob_{V_T|h}\{Q_{\S|h}(V_T) > 1-\alpha\} \leq \alpha$, for all $\alpha \in (0,1)$.  Take any such $\alpha$ and set $S_\alpha = \bigcap\{S \in \SS_h: \prob_{V_T|h}(S) \geq 1-\alpha\}$.  Since $\SS_h$ is nested, it follows that $S_\alpha \in \SS_h$ and $\prob_{V_T|h}(S_\alpha) \geq 1-\alpha$.  By \eqref{eq:cond.natural.measure}, $\prob_{\S|h}\{\S \subseteq S_\alpha\} \geq 1-\alpha$.  Since $Q_{\S|h}(v_T) > 1-\alpha$ iff $v_T \not\in S_\alpha$, it follows that
%\[ \prob_{V_T|h}\{Q_{\S|h}(V_T) > 1-\alpha\} = \prob_{V_T|h}(S_\alpha^c) = 1-\prob_{V_T|h}(S_\alpha) \leq \alpha. \]
%The claim follows since $h$ and $\alpha$ were arbitrary.   
\end{proof}

\begin{proof}[Proof of Theorem~\ref{thm:cond.valid}]
Take any $\theta \not\in A$ as the true value of the parameter.  Next, note that $T(X) = b(V_T,\theta)$, with $V_T \sim \prob_{V_T|h}$, characterizes the conditional distribution of $X$, given $H(X)=h$.  Since $A \subset \{\theta\}^c$, monotonicity of the belief function gives 
\[ \cbel_{T(X)|h}(A;\S) \leq \cbel_{T(X)|h}(\{\theta\}^c;\S) = \prob_{\S|h}\{\Theta_{T(X)}(\S) \not\ni \theta\} = Q_{\S|h}(V_T). \]
By Lemma~\ref{lem:prs.valid}, the right-hand side is stochastically no larger than ${\sf Unif}(0,1)$.  This, in turn, implies the same of the left-hand side $\cbel_{T(X)|h}(A;\S)$, as a function of $X \sim \prob_{X|\theta}$, given $H(X)=h$.  Therefore, 
\[ \prob_{X|\theta}\{\cbel_{T(X)|h}(A;\S) \geq 1-\alpha \mid H(X) = h\} \leq \prob\{ {\sf Unif}(0,1) \geq 1-\alpha\} = \alpha. \]
Taking supremum over $\theta \not\in A$ proves \eqref{eq:cond.valid.bel}.  
\end{proof}

\begin{proof}[Proof of Corollary~\ref{co:cond.valid}]
Since the distribution of $\S$ is free of $h$ in this case, the belief function $\cbel_{T(X)|h} \equiv \cbel_{T(X)}$ is also free of $h$.  Therefore, before taking supremum in the last line of the proof of Theorem~\ref{thm:cond.valid}, we can take expectation over $h$ to remove the conditioning, so that the validity property holds unconditionally, like in \eqref{eq:bel.valid}.  
\end{proof}

\bibliographystyle{apalike}
\bibliography{/Users/rgmartin/Dropbox/Research/mybib}

\end{document}